\newcommand{\bi}{\begin{itemize}}
\newcommand{\ei}{\end{itemize}}
\newcommand{\bd}{\begin{description}}
\newcommand{\ed}{\end{description}}
\newcommand{\ben}{\begin{enumerate}}
\newcommand{\een}{\end{enumerate}}
\newcommand{\bbm}{\begin{bmatrix}}
\newcommand{\ebm}{\end{bmatrix}}
\newcommand{\bea}{\begin{eqnarray*}}
\newcommand{\eea}{\end{eqnarray*}}
\newtheorem{theorem}{Theorem}[section]
\newtheorem{conjecture}[theorem]{Conjecture}
\def\TT{\mathbb{T}}
\def\Xx{\mathcal{X}}
\def\kk{\mathbb{K}}
\def\KK{\mathbb{K}}
\def\Ff{\mathbb{F}}
\def\LL{\mathbb{L}}
\def\uu{\mathbb{U}}
\def\Tr{{\rm Tr}}
\def\GF{\mathbb{F}}
\def\2G2{\ensuremath{^2{\rm G}_2}}
\def\sl{{\rm{SL}}}
\def\gl{{\rm{GL}}}
\def\su{{\rm{SU}}}
\def\psl{{\rm{PSL}}}
\def\so{{\rm{SO}}}
\def\ppom{{\rm{(P) \Omega }}}
\def\om{{\rm{\Omega }}}
\def\sp{{\rm{Sp}}}
\def\ppsl{ ( {\rm{P}} ) {\rm{SL}}}
\def\ppsp{ ( {\rm{P}} ) {\rm{Sp}}}
\def\ppone{p \equiv 1 \bmod 4}
\def\pmone{p \equiv -1 \bmod 4}
\def\qpone{q \equiv 1  \bmod 4}
\definecolor{darkgreen}{rgb}{0,0.6,0}
\begin{document}

\title{Fifty Shades of Black}

\author{Alexandre Borovik}
\address{School of Mathematics, University of Manchester, UK; alexandre.borovik@gmail.com}
 \author{\c{S}\"{u}kr\"{u} Yal\c{c}\i nkaya}
 \address{Nesin Mathematics Village, Izmir, Turkey; sukru.yalcinkaya@gmail.com}

\subjclass{Primary 20P05, Secondary 03C65}

\begin{abstract}
The paper proposes a new and systematic approach to the so-called black box group methods in computational group theory. As the starting point of our programme, we construct Frobenius maps on black box groups of untwisted Lie type in odd characteristic and then apply them to
black box groups $X$ encrypting groups $\ppsl_2(q)$ in small odd characteristics. We propose an algorithm  constructing a black box field $\KK$ isomorphic to $\Ff_q$, and  an isomorphism from $\ppsl_2(\KK)$ to $X$.  The algorithm runs in time quadratic in the characteristic of the underlying field and polynomial in $\log q$.

Due to the nature of our work we also have to discuss a few methodological issues of the black box group theory.
\end{abstract}

\maketitle

\begin{footnotesize}
\tableofcontents
\end{footnotesize}

\section{Introduction}

Black box groups were introduced by Babai and Szemeredi \cite{babai84.229} as an idealized setting for randomized algorithms for solving permutation and matrix group problems in computational group theory. This paper belongs to a series of works aimed at development of systematic structural analysis of black box groups \cite{suko03,suko12A,suko12E,suko12B,suko12C,suko02,yalcinkaya07.171}.

The principal results of this paper are concerned with construction of Frobenius maps on black box Chevalley groups of untwisted type and odd characteristic, they are stated and proven in Section~\ref{ssec:Frobenius}.

In Section~\ref{sec:proofpsl2odd}, these constructions are applied to prove Theorem~\ref{cons:psl2} concerned with recognition of black box groups $\ppsl_2(q)$ for $\qpone$ and $q=p^k$ for some $k\geqslant 1$.

Our approach requires a detailed discussion of some methodological issues of black box group theory; this discussion is spread all over the paper and is supported by some ``toy''  mathematical results, such as Theorem~\ref{th:sl(2,2n)} that provides recognition of black box groups $SL_2(2^n)$ under a (rather hypothetical) assumption that  we are given an involution in the group.

\section{Black box groups and their automorphisms}

\subsection{Axioms  BB1 -- BB3}

A black box group $X$ is a black box (or an oracle, or a device, or an algorithm) operating with $0$--$1$ strings of bounded length which encrypt (not necessarily in a unique way) elements of some finite group $G$ (in various classes of black box problems the isomorphism type of $G$ could be known in advance or unknown). The functionality of a black box is specified by the following axioms, where every operation is carried out in time polynomial in terms of  $\log |G|$.

\begin{itemize}
\item[\textbf{BB1}] $X$ produces strings of fixed length $l(X)$ encrypting random (almost) uniformly distributed elements from $G$; the string length $l(X)$ is polynomially bounded in terms of $\log |G|$.
\item[\textbf{BB2}] $X$ computes, in time polynomial in $l(X)$, a string encrypting the product of two group elements given by strings or a string encrypting the inverse of an element given by a string.
\item[\textbf{BB3}] $X$ compares, in time polynomial in $l(X)$, whether two strings encrypt the same element in $G$---therefore identification of strings is a canonical projection
\begin{diagram} X & \rDotsto^\pi & G.
 \end{diagram}
\end{itemize}

We shall say in this situation that $X$ is a \emph{black box over $G$} or that a black box $X$ \emph{encrypts} the group $G$. Notice that we are not making any assumptions on computability of the projection $\pi$.

A typical example of a black box group is provided by a group $G$ generated in a big matrix group $\gl_n(r^k)$ by several matrices $g_1,\dots, g_l$. The product replacement algorithm \cite{celler95.4931} produces  a sample of (almost) independent elements from a distribution on $G$ which is close to the uniform distribution (see a discussion and further development in \cite{babai00.627, babai04.215, bratus99.91, gamburd06.411, lubotzky01.347, pak00.476, pak01.301, pak.01.476, pak02.1891}). We can, of course, multiply, invert, compare matrices. Therefore the computer routines for these operations together with the sampling of the product replacement algorithm run on the  tuple of generators $(g_1,\dots, g_l)$ can be viewed as a black box $X$ encrypting the group $G$. The group $G$ could be unknown---in which case we are interested in its isomorphism type---or it could be known, as it happens in a variety of other black box problems.

\subsection{Global exponent and Axiom BB4}

Notice that even in routine examples the number of elements of a matrix group $G$ could be astronomical, thus making many natural questions about the black box $X$ over $G$---for example, finding the isomorphism type or the order of $G$---inaccessible for all known deterministic methods. Even when $G$ is cyclic and thus is characterized by its order, existing approaches to finding multiplicative orders of matrices over finite fields are conditional and involve oracles either for the discrete logarithm problem in finite fields or for prime factorization  of integers.

Nevertheless black box problems for matrix groups have a feature which makes them more accessible:

\begin{itemize}
\item[\textbf{BB4}] We are given a \emph{global exponent} of $X$, that is, a natural number $E$ such that it is expected that $\pi(x)^E = 1$ for all strings $x \in X$ while computation of $x^E$ is computationally feasible (say, $\log E$ is polynomially bounded in terms of $\log |G|$).
\end{itemize}

Usually, for a black box group $X$ arising from a subgroup in the ambient group $\gl_n(r^k)$, the exponent of $\gl_n(r^k)$ can be taken for a global exponent of $X$.

One of the reasons why  the axioms BB1--BB4, and, in particular, the concept of global exponent, appear to be natural, is provided by some surprising model-theoretic analogies. For example, D'Aquino and Macintyre  \cite{D'Auquino00.311} studied non-standard finite fields defined in a certain fragment of bounded Peano arithmetic; it is called $I\Delta_0+\Omega_1$ and imitates proofs and computations of polynomial time complexity in modular arithmetic. It appears that such basic and fundamental fact as the Fermat Little Theorem has no proof which can be encoded in $I\Delta_0+\Omega_1$; the best that had so far been proven in $I\Delta_0+\Omega_1$ is that the multiplicative group $\GF_p^*$ of the prime field $\GF_p$ has a global exponent $E <2p$ \cite{D'Auquino00.311}.

We shall discuss model theory and logic connections of black box group theory in some details elsewhere.

\subsection{Relations with other black box groups projects}
\label{ssec:matrix-project}

\begin{quote}
\textbf{\emph{In this paper, we assume that all our black box groups satisfy assumptions BB1--BB4. }}
\end{quote}

We emphasize that we do not assume that black box groups under consideration in this paper are given as subgroups of ambient matrix groups; thus our approach is wider than the setup of the computational matrix group project \cite{leedham-green01.229}. Notice that we are not using the Discrete Logarithm Oracles for finite fields $\Ff_q$: in our original setup, we do not have fields. Nevertheless we are frequently concerned with black box groups encrypting classical linear groups; even so, some of our results (such as Theorems~\ref{th:sun-in-sln} and \ref{th:G2-in-SL8}) do not even involve the assumption that we know the underlying field of the group but instead assume the knowledge of the characteristic of the field without imposing bounds on the size of the field. Finally, in the case of groups over fields of small characteristics we can prove much sharper results, see, for example, Theorem~\ref{cons:psl2}. Here, it is natural to call characteristic $p$ ``small'', if it is known and if a linear or quadratic dependency of the running time of algorithm on $p$ does not cause trouble and algorithms are computationally feasible.

So we attach to statements of our results one of the two labels:
\bi
\item Known characteristic,
\item Small characteristic.
\ei
Our next paper \cite{suko12B} is dominated by ``known characteristic'' results. In this one, we concentrate on black box groups of known or small characteristics.

\subsection{Morphisms}
\label{sec:morphisms}

Given two  black boxes $X$ and $Y$ encrypting  finite groups $G$ and  $H$, correspondingly, we say that a map $\alpha$ which assigns strings from $Y$ to strings from $X$  is a \emph{morphism} of black box groups, if
\bi
\item the map $\alpha$ is computable in probabilistic time polynomial in $l(X)$ and $l(Y)$, and
\item there is an abstract homomorphism $\beta:G \to H$ such that the following diagram  is commutative:
\begin{diagram}
X &\rTo^{\alpha} &Y\\
\dDotsto_{\pi_{X}} & &\dDotsto_{\pi_{Y}}\\
G &\rTo^{\beta} & H
\end{diagram}
where $\pi_X$ and $\pi_Y$ are the canonical projections of $X$ and $Y$ onto $G$ and $H$, correspondingly.
\ei
We shall say in this situation that a morphism $\alpha$ \emph{encrypts} the homomorphism $\beta$. {For example, morphisms arise naturally when we replace a generating set for black box group $X$ by a more convenient one and start sampling the product replacement algorithm for the new generating set; in fact, we replace a black box for $X$  and deal with a morphism $Y \longrightarrow X$ from the new black box into $X$. Also, a black box subgroup $Z$ of $X$ is a morphism $Z\hookrightarrow X$.}

Slightly abusing terminology, we say that a morphism $\alpha$ is an embedding, or an epimorphism, etc., if  $\beta$ has these properties. In accordance with standard conventions, hooked arrows \begin{diagram}& \rInto & \end{diagram} stand for embeddings and doubleheaded arrows \begin{diagram}& \rOnto & \end{diagram} for epimorphisms; dotted arrows are reserved for abstract homomorphisms, including natural projections
\begin{diagram}
X & \rDotsto^{\pi_X} & \pi(X);
\end{diagram}
the latter are not necessarily morphisms, since, by the very nature of black box problems, we do not have efficient procedures for constructing the projection of a black box onto the (abstract) group it encrypts.

We further discuss morphisms in Sections \ref{50} and \ref{ssec:structure-recovery}.

\subsection{Shades of black}

Polynomial time complexity is an asymptotic concept, to work with it we need an infinite class of objects. Therefore our theory refers to some infinite family $\Xx$ of black box groups ($\Xx$ of course varies from one black box problem to another). For $X \in \Xx$, we denote by $l(X)$ the length of $0$--$1$ strings representing elements in $X$. We assume that, for every $X\in \Xx$, basic operations of generating, multiplying, comparing strings in $X$ can be done in probabilistic polynomial time in $l(X)$. We assume that encryption of group elements in $X$ is sufficiently economical and $l(X)$ is bounded by a polynomial in $\log |\pi(X)|$.

We also assume that the lengths $\log E(X)$ of  global exponents $E(X)$ for $X\in \Xx$ are bounded by a polynomial  in $l(X)$.

{Morphism $X \longrightarrow Y$ in $\Xx$ are understood as defined in Section~\ref{sec:morphisms} and their running times are bounded by a polynomial in $l(X)$ and $l(Y)$.

At the expense of {slightly increasing $\Xx$ and its bounds for complexity, we can include in $\Xx$ a collection of explicitly given ``known'' finite groups. Indeed, using standard computer implementations of finite field arithmetic, we can represent every group $Y = \gl_n(\GF_{p^k})$ as an algorithm or computer routine operating on $0$--$1$ strings of length $l(Y)= n^2k\log p$. Using standard matrix representations for simple algebraic groups, we can represent every group of points $Y = {\rm G}(\GF_{p^k})$ of a reductive algebraic group $\rm G$ defined over $\GF_{p^k}$ as a black box $Y$ generating and processing strings of length $l(Y)$ polynomial in $\log |\GF_{p^k}|$ and the Lie rank of $Y$. Therefore an ``explicitly defined'' group can be seen a black box group, perhaps of a lighter shade of black.

We shall use direct products of black boxes: if $X$ encrypts $G$ and $Y$ encrypts $H$ then the black box  $X\times Y$ generates pairs of strings $(x,y)$ by sampling $X$ and $Y$ independently, with operations carried out componentwise in $X$ and $Y$; of course, $X\times Y$ encrypts $G\times H$.

\begin{figure}[htbp]
\begin{center}
\vspace{.5in}
\includegraphics[scale=.15]{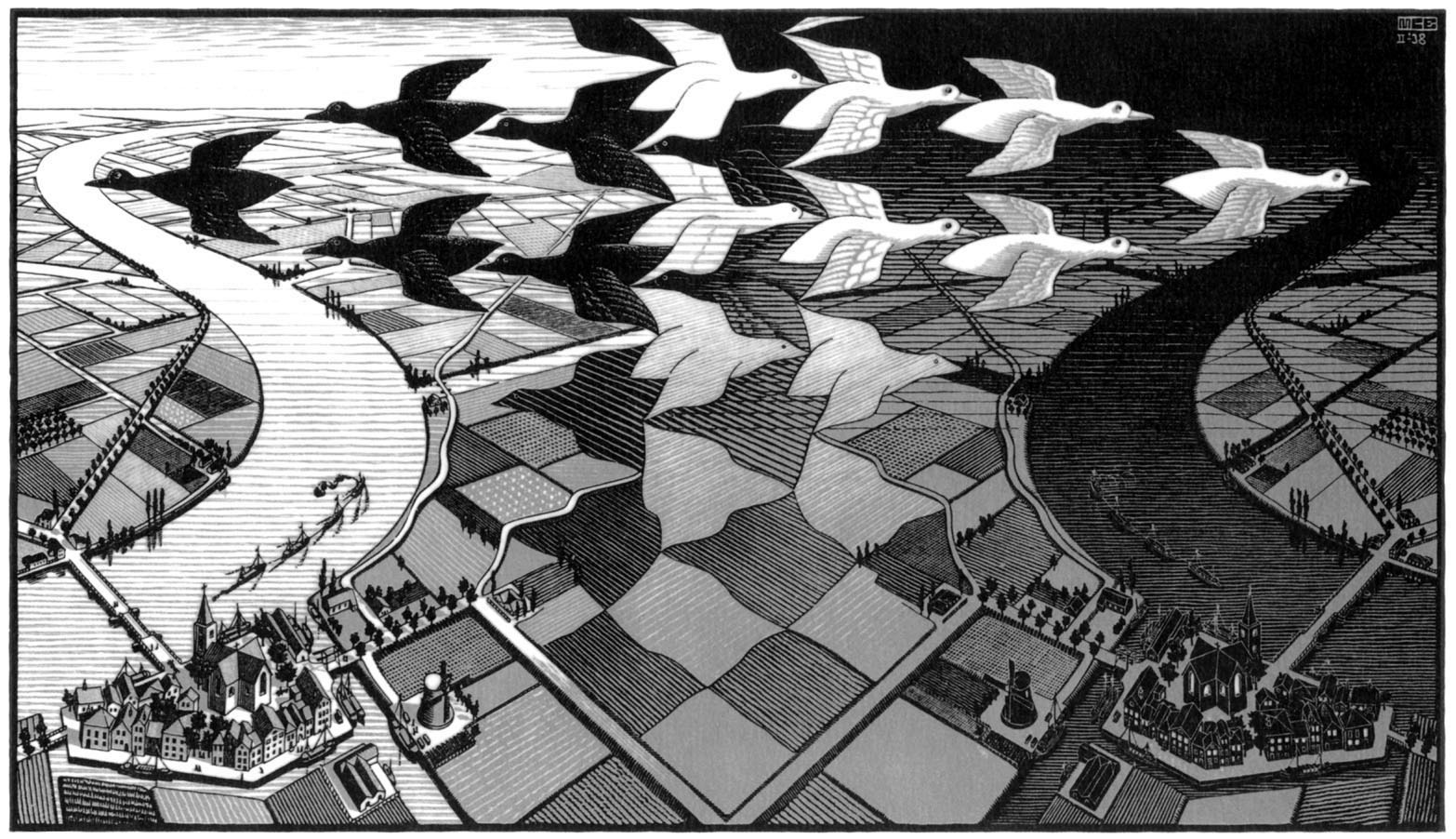}
\caption{ M.C. Escher,  \emph{Day and Night}, 1938}
\label{Escher}
\end{center}
\end{figure}

We  feel that the best way to understand a black box group
\begin{diagram}
G & \lDotsto & X
\end{diagram}
is a step-by-step construction of a chain of morphisms
\begin{diagram}
G & \lDotsto & X & \lOnto & X_1 & \lOnto & X_2 & \lOnto & \cdots & \lOnto  & X_n &\lOnto &G
\end{diagram}
at each step changing the shade of black and increasing amount of information provided by black boxes $X_i$.

Even in relatively simple black box problems we may end up dealing with a sophisticated category of black boxes and their  morphisms. Step-by-step transformation of black boxes into ``white boxes'' and their complex entanglement is captured well by Escher's famous woodcut, Figure~\ref{Escher}.

\subsection{Automorphisms as lighter shades of black}\label{50}

The first application of the ``shadows of black'' philosophy is the following self-evident theorem which explains how an automorphism of a group can be added to a black box encrypting this group.

\begin{theorem}\label{th:automorphism}
Let $X$ be a black box group encrypting a finite group $G$ and assume that each of\/ $k$ tuples of strings
\[
\tilde{x}^{(i)} = (x^{(i)}_1,\dots,x^{(i)}_m),\quad i = 1,\dots,k,
\]
generate $X$ in the sense that the projections $\pi\left(x^{(i)}_1\right),\dots,\pi\left(x^{(i)}_m)\right)$ generate $G$. Assume that the map
\[
\pi: x^{(i)}_j \mapsto \pi(x^{(i+1 \bmod k)}_j),\quad i = 0,\dots, k-1,\quad j =1,\dots, m,
\]
can be extended to an automorphism $a \in {\rm Aut}\, G$ of order $k$.
The black box group $Y$ generated in $X^k$ by the strings
\[
\bar{x}_j = \left(x^{(0)}_j, x^{(1)}_j, \dots, x^{(k-1)}_j\right), \quad j = 1,\dots, m,
\]
encrypts $G$ via the canonical projection on the first component
\[
(y_0,\dots, y_{k-1}) \mapsto \pi(y_o),
\]
and possess an additional unary operation, cyclic shift
\bea
\alpha: Y &\longrightarrow& Y\\
(y_0,y_2,\dots,y_{k-1},y_{k-1}) &\mapsto & (y_1,y_2,\dots,y_{k-1},y_0)
\eea
which encrypts the automorphism $a$ of $G$ in the sense that the following diagram commutes:
\begin{diagram}
Y &\rOnto^\alpha & Y\\
\dDotsto &&  \dDotsto\\
G & \rDotsto^a & G
\end{diagram}
\end{theorem}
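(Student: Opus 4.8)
The plan is to make the encrypted group completely explicit and then read off both claims from its structure. Write $g_j = \pi(x^{(0)}_j)$, so that $g_1,\dots,g_m$ generate $G$, and note that the hypothesis on $a$ says precisely $\pi(x^{(i)}_j) = a^i(g_j)$ for all $i,j$. First I would identify $D := \pi_{X^k}(Y)$, the subgroup of $G^k$ actually encrypted by $Y$, where $\pi_{X^k}$ is the componentwise canonical projection of $X^k$ onto $G^k$. Its generators are the images $\pi_{X^k}(\bar{x}_j) = (g_j, a(g_j), \dots, a^{k-1}(g_j))$.

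The key computation is to show that $D$ equals the twisted diagonal
\[
\Delta = \{(h, a(h), \dots, a^{k-1}(h)) : h \in G\}.
\]
The containment $D \subseteq \Delta$ is clear, because $\Delta$ is a subgroup ($a$ being a homomorphism) containing every generator; the reverse containment uses that $a$ commutes with evaluation of group words, so that for any word $w$ with $w(g_\bullet) = h$ the element $w(\bar{x}_1, \dots, \bar{x}_m)$ projects to $(w(g_\bullet), w(a g_\bullet), \dots) = (h, a(h), \dots, a^{k-1}(h))$. Since the first-coordinate projection $p_0 \colon \Delta \to G$, $(h, a(h), \dots) \mapsto h$, is visibly a bijective homomorphism, $D \cong G$ canonically. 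This yields the first claim: $Y$ inherits the black box operations of $X^k$ (products and inverses componentwise, random elements by product replacement on the $\bar{x}_j$, comparison componentwise), and the composite $\pi_Y = p_0 \circ \pi_{X^k}|_Y \colon Y \to G$, $(y_0, \dots, y_{k-1}) \mapsto \pi(y_0)$, is its canonical projection. One also checks that native comparison in $X^k$ (all coordinates) agrees with comparison of first coordinates only, precisely because in $\Delta$ the first coordinate determines the rest.

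For the cyclic shift I would first note that coordinate rotation $\sigma \colon (z_0, \dots, z_{k-1}) \mapsto (z_1, \dots, z_{k-1}, z_0)$ is an automorphism of $X^k$ encrypting the corresponding rotation of $G^k$, and is a computationally trivial operation. The only genuine point — and the step I expect to be the main obstacle, since it is exactly what fails for an arbitrary subgroup of $X^k$ — is to verify that $\sigma$ stabilises $\Delta$ (equivalently, that $\alpha$ maps $Y$ into $Y$). This is where the order-$k$ hypothesis is used: rotating a twisted-diagonal element gives
\[
\sigma(h, a(h), \dots, a^{k-1}(h)) = (a(h), a^2(h), \dots, a^{k-1}(h), a^k(h)),
\]
and since $a^k = \mathrm{id}$ the right-hand side is precisely the twisted-diagonal element attached to $a(h)$. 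Thus $\sigma(\Delta) = \Delta$, and transporting along $p_0$ shows that $\sigma|_\Delta$ corresponds to $a$. Reading this through the identification $D \cong G$ gives $\pi_Y \circ \alpha = a \circ \pi_Y$, which is the asserted commuting diagram; that $\alpha$ encrypts an automorphism of order $k$ follows from $\sigma^k = \mathrm{id}$. The remaining verifications (that all inherited operations stay within the stated polynomial-time bounds) are routine bookkeeping.
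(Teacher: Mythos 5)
Your proof is correct and follows essentially the same route the paper intends: the paper declares the theorem self-evident and immediately records the twisted diagonal embedding $d\colon x \mapsto (x, x^a, x^{a^2}, \dots, x^{a^{k-1}})$ in its ``more precise formulation'' diagram, which is exactly the subgroup $\Delta$ you identify as the image of $Y$. Your explicit verifications --- that $D=\Delta$ via evaluation of words, that comparison of first coordinates agrees with componentwise comparison on $\Delta$, and that the cyclic shift stabilises $\Delta$ and transports to $a$ along the first-coordinate projection using $a^k=\mathrm{id}$ --- are precisely the details the paper leaves unstated.
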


A somewhat more precise  formulation of Theorem~\ref{th:automorphism} is that we can construct, in polynomial in $k$ and $m$ time, a commutative diagram
\begin{equation}
 \label{diag:Fr}
\begin{diagram}
 X & \lOnto^{\{\,\pi_i\,\}_{0\leqslant i\leqslant k-1}} & X^k & \lInto^\delta & Y&\rDotsto^\alpha& Y\\
 \dDotsto &&\dDotsto && \dDotsto && \dDotsto \\
G & \lDotsto^{\{\,p_i\,\}_{1\leqslant 0\leqslant k-1}} & G^k &\lDotsto^d& G &\rDotsto^a &G
\end{diagram}
\end{equation}
where  $d$ is the twisted diagonal embedding
\bea
d: G & \longrightarrow & G^k \\
 x & \mapsto & (x,x^a,x^{a^2},x^{a^{k-1}}),
\eea
and $p_i$ is the projection
\bea
p_i: G^k & \longrightarrow & G \\
(g_0,\dots,g_i,\dots,g_{k-1}) &\mapsto& g_i.
\eea

Of course, this construction leads to memory requirements increasing by factor of $k$, but, as our subsequent papers \cite{suko12B,suko12C} show, this is price worth paying. After all, in most practical problems the value of $k$ is not that big, in most interesting cases $k=2$.

A useful special case of Theorem~\ref{th:automorphism} is the following result about amalgamation of black box automorphisms, stated here in an informal wording rather than expressed by a formal commutative diagram.

\begin{theorem} \label{th:subgroups-automorphisms} Let $X$ be a black box group encrypting a group $G$. Assume that $G$ contains subgroups $G_1,\dots, G_l$ invariant under an automorphism $\alpha \in \mathop{{\rm Aut}}G$ and that these subgroups are encrypted in $X$ as black boxes $X_i$, $i = 1,\dots, l$, supplied with morphisms $\phi_i: X_i \longrightarrow X_i$ which encrypt restrictions $\alpha\!\mid_{G_i}$ of $\alpha$ on $G_i$.

Finally, assume $\langle G_i, i=1,\dots,l\rangle=G$.

  Then we can construct, in polynomial in $l(X)$ time, a morphism $\phi: X \longrightarrow X$ which encrypts $\alpha$.
\end{theorem}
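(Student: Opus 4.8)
The plan is to reduce the statement to Theorem~\ref{th:automorphism}, whose whole purpose is to install an automorphism on a black box without ever solving the constructive membership problem in $X$. The naive route --- express an arbitrary string $x\in X$ as a word in a generating set drawn from the $X_i$ and then apply the $\phi_i$ letter by letter --- is exactly what must be avoided, since rewriting a black box element as a word in given generators is not known to be feasible in polynomial time. The role of the amalgamation is therefore not to compute $\alpha$ pointwise on $X$, but to assemble from the local data $\phi_1,\dots,\phi_l$ the global input that Theorem~\ref{th:automorphism} needs: a family of generating tuples of $X$ that $\alpha$ permutes cyclically.

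First I would fix, for each $i$, a tuple of strings in $X_i$ whose projections generate $G_i$, and concatenate these into a single tuple $\tilde x^{(0)}=(x_1,\dots,x_m)$; since $\la G_1,\dots,G_l\ra=G$, the projections $\pi(x_1),\dots,\pi(x_m)$ generate $G$, so $\tilde x^{(0)}$ generates $X$. The key observation is that the powers of $\alpha$ can be evaluated on these generators \emph{internally}: because $G_i$ is $\alpha$-invariant and $\phi_i$ encrypts $\alpha\!\mid_{G_i}$ as a map $X_i\to X_i$, the iterate $\phi_i^{\,t}$ is again a morphism $X_i\to X_i$ encrypting $\alpha^t\!\mid_{G_i}$, so for a generator $x_j$ lying in $X_i$ the string $\phi_i^{\,t}(x_j)$ encrypts $\alpha^t(\pi(x_j))$. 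Here the $\alpha$-invariance of $G_i$ is essential: it guarantees that $\alpha^t(\pi(x_j))$ stays inside $G_i$ and so remains accessible to $\phi_i$.

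Next I would determine $k=\mathrm{ord}(\alpha)$. Since $G=\la G_1,\dots,G_l\ra$ and each generator lies in some $G_i$, one checks that $\alpha^n=1$ iff $\alpha^n$ fixes every generator, whence $k=\mathrm{lcm}_i\,\mathrm{ord}(\alpha\!\mid_{G_i})$; each $\mathrm{ord}(\alpha\!\mid_{G_i})$ is found by iterating $\phi_i$ on the generators of $X_i$ and testing with BB3 for the return to the identity. Setting $\tilde x^{(t)}=\bigl(\phi^{\,t}(x_1),\dots,\phi^{\,t}(x_m)\bigr)$ for $t=0,\dots,k-1$, where $\phi^{\,t}(x_j)$ abbreviates $\phi_{i(j)}^{\,t}(x_j)$ and $x_j\in X_{i(j)}$, each $\tilde x^{(t)}$ generates $X$ (an automorphic image of a generating set again generates $G$), and the assignment $x_j^{(t)}\mapsto\pi(x_j^{(t+1\bmod k)})$ is precisely the action of $\alpha$ on the relevant group elements, so it extends to the automorphism $\alpha$ of order $k$. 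These are exactly the hypotheses of Theorem~\ref{th:automorphism}.

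Applying Theorem~\ref{th:automorphism} to the tuples $\tilde x^{(0)},\dots,\tilde x^{(k-1)}$ produces the black box $Y\le X^k$ encrypting $G$ via the twisted diagonal $d$, together with the cyclic shift $\alpha\colon Y\to Y$ encrypting $\alpha$ and the connecting morphisms of diagram~(\ref{diag:Fr}); composing $Y\overset{\delta}{\hookrightarrow}X^k\overset{\pi_0}{\longrightarrow}X$ encrypts the identity $G\to G$ and so exhibits $Y$ as a (lighter) black box for the same group $G$ on which $\alpha$ is now a genuine black-box operation. This realization of $\alpha$ is the content of the statement, which the authors flag as informal. The construction uses only $O(k)$ iterations of the $\phi_i$ on $m$ generators followed by the procedure of Theorem~\ref{th:automorphism}, so it runs in time polynomial in $l(X)$, $m$ and $k$. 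The main point to be careful about is the dependence on $k=\mathrm{ord}(\alpha)$: the bound is genuinely polynomial in $l(X)$ only when $k$ is polynomially bounded --- as it is for the Frobenius and graph automorphisms of interest, where $k$ is small --- and the one real obstacle is ensuring that $\mathrm{ord}(\alpha)$ can be read off and that the factor-$k$ memory blow-up inherited from Theorem~\ref{th:automorphism} is acceptable.
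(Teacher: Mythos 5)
Your proposal is correct and follows the same route the paper intends: the paper offers no separate proof of this statement, presenting it only as ``a useful special case of Theorem~\ref{th:automorphism},'' and your derivation---concatenating generating tuples drawn from the $X_i$, iterating the $\phi_i$ to build the $k$ cyclically permuted generating tuples, and feeding them to Theorem~\ref{th:automorphism}---is precisely the intended reduction. Your caveat about the dependence on $k=\mathrm{ord}(\alpha)$ is a fair observation that the paper's informal ``polynomial in $l(X)$'' claim silently assumes, as it does in all its applications where $k$ is small.
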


\subsection{Construction of Frobenius maps}
\label{ssec:Frobenius}

We now use Theorem~\ref{th:automorphism} to construct a Frobenius map on a black box group $X$ encrypting $\ppsl_2(q)$ with $\qpone$ and $q=p^k$ for some $k\geqslant 1$. We make sure that the Frobenius map constructed leaves invariant the specified Borel subgroup, thus giving us access to subtler structural properties of the group.

We shall use the following result from \cite{suko12A}.

\begin{theorem}[Small characteristic] \label{cons:psl2B} {\rm \cite[Theorem 1.2]{suko12A} }
Let $X$ be a black box group encrypting $\ppsl_2(q)$, where $\qpone$ and $q=p^k$ for some $k\geqslant 1$. If $p\neq 5,7$, then there is a Monte-Carlo algorithm which constructs in $X$ strings $u$, $h$, $n$   such that there exists an \emph{(abstract)} isomorphism\/ $$\Phi: X \longrightarrow \ppsl_2(q)$$ with
\[
\Phi(u) = \begin{bmatrix} 1&1\\ 0&1\end{bmatrix}, \Phi(h) =  \begin{bmatrix} t&0\\ 0&t^{-1}\end{bmatrix}, \Phi(n) = \begin{bmatrix} 0&1\\ -1&0\end{bmatrix},
\]
where $t$ is some primitive element of the field\/ $\mathbb{F}_q$. The running time of the algorithm is quadratic in $p$ and polynomial in $\log q$.

If $p=5$ or $7$, and $k$ has a small divisor $\ell$, the same result holds where the running time is polynomial in $\log q$ and quadratic in $p^\ell$.
\end{theorem}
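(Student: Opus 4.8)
The plan is to reconstruct the $(B,N)$-pair data of $G=\ppsl_2(q)$ directly inside $X$ and, along the way, to coordinatize a root subgroup by a field $\KK\cong\Ff_q$. First I would produce involutions: sampling random strings by BB1, raising them to the odd part of the global exponent $E$ (available by BB4), and extracting $2$-parts yields involutions, and for $\qpone$ the $2$-share of $|G|$ is large enough that involutions are abundant and found by a Monte-Carlo search. Fixing an involution $i$, I would construct its centralizer $C=C_X(i)$ by Bray's one-involution method. Since for $\qpone$ this centralizer is dihedral of order $q-1$ (modulo the at most order-two centre), I can isolate its cyclic ``rotation'' part, a split torus $T$ of order $(q-1)/2$, together with a reflecting involution that will become the Weyl string $n$.

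Next I would produce a root element. For a suitable random $g$, the $p$-part of $g$ (again extracted via $E$) lies in a Sylow $p$-subgroup, that is, in a root subgroup $U$, which is elementary abelian of exponent $p$ and order $q=p^k$, hence a $k$-dimensional space over $\Ff_p$. Normalizing $U$ by $T$ and exploiting that $T$ acts on $U$ as multiplication by squares (a generator $h$ sends $u_s$ to $u_{t^2s}$ when the target of $h$ is the diagonal matrix with entries $t,t^{-1}$), I can equip $U$ with a scalar action; combined with the additive structure of $U$ and the prime line $\la u\ra\cong(\Ff_p,+)$, this reconstructs a genuine field $\KK\cong\Ff_q$, the black box field promised in the abstract. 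Here the hypothesis $\qpone$ is useful: because $4\mid q-1$, requiring $h$ to generate $T$ forces the associated scalar to be $t^2$ for a \emph{primitive} $t$, determined up to sign. Choosing $u$ to be the generator of the prime line labelled by the field element $1$, and adjusting the reflecting involution $n$ so that it inverts $T$ and carries $u$ to the matching lower-triangular root element, I obtain a triple matching the three prescribed matrices.

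It then remains to verify that the assignment of $u,h,n$ to the three displayed matrices respects the defining Steinberg relations of $\ppsl_2(q)$ with parameter $t$; as these relations present the group, the assignment extends to the required abstract isomorphism $\Phi$. For the running time, sampling and the Bray centralizer construction cost $\mathrm{poly}(\log q)$ per element, while the field reconstruction, installing $\Ff_p$-arithmetic inside $\la u\ra$ and propagating it across the $k$ coordinates of $U$ via the torus action, involves searches of size $O(p)$ nested to depth two, which is what produces the stated quadratic-in-$p$, polynomial-in-$\log q$ bound.

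I expect the field reconstruction to be the main obstacle: turning an abelian black box group (the root subgroup) together with one cyclic action (the torus) into an honest field isomorphic to $\Ff_q$, constructively and without recourse to order computations or discrete logarithms. The delicate point is that the torus supplies only multiplication by squares, so one must certify that the squares, together with the additive structure and the element $1$, pin down all of $\Ff_q$ and select an unambiguous primitive $t$; the $O(p)$ searches forced here are the true source of the quadratic dependence on $p$. Finally, the excluded primes $p=5,7$ are precisely where the genericity underlying these searches fails (too few elements of the needed orders, and the square-action failing to generate enough of the field); in that case I would descend to a subfield $\Ff_{p^\ell}$ for a small divisor $\ell\mid k$, carry out the reconstruction over $\Ff_{p^\ell}$, and absorb the resulting cost into a $p^\ell$-quadratic bound, matching the theorem's second assertion.
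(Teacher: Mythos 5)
First, a point of comparison: this paper does not prove Theorem~\ref{cons:psl2B} at all. It is imported verbatim from \cite[Theorem 1.2]{suko12A} and used here as an input to Theorem~\ref{th:Frobenius} and to the structure recovery Theorem~\ref{cons:psl2}. So your proposal can only be judged on its own merits, not against an argument contained in this text.

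On its merits, there is a genuine gap at the decisive step. You propose to obtain a root element by taking ``the $p$-part of a suitable random $g$'' via the global exponent. But the proportion of elements of $\ppsl_2(q)$ whose order is divisible by $p$ is $O(1/q)$ --- a fact this paper itself emphasizes in its discussion of constructive recognition, citing \cite{guralnick01.169} --- so that search requires $O(q)$ samples, which is exponential in the input length $\log q$, not quadratic in $p$. Producing a unipotent element within the stated time bound is precisely the hard content of \cite[Theorem 1.2]{suko12A}; the rest of your outline (involutions from the odd part of the global exponent, Bray centralizers, the dihedral centralizer of order $q-1$ for $\qpone$, a maximal split torus plus a reflecting Weyl element, the torus acting on a root subgroup by $u \mapsto u^{t^2}$) is standard and essentially sound, but it all sits downstream of the step you have not supplied. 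Relatedly, your diagnosis of where the quadratic-in-$p$ cost and the exclusion of $p=5,7$ come from is misplaced: the full field reconstruction inside $U$ is not even required by the statement being proved (that is the content of the later Theorem~\ref{cons:psl2}, which additionally needs the Frobenius map of Theorem~\ref{th:Frobenius}); the $p$-dependence and the excluded primes arise from the way \cite{suko12A} manufactures unipotent elements inside a subfield subgroup encrypting $\psl_2(p)$ or $\psl_2(p^2)$, where the density of unipotents is $O(1/p)$ rather than $O(1/q)$ and where small primes cause exceptional behaviour. A secondary, fixable omission: even granted a unipotent $u$ and a torus generator $h$, you must still arrange that $h$ normalizes the root subgroup containing $u$ and resolve the sign and square ambiguities in identifying the primitive element $t$; you gesture at this but do not carry it out.
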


In notation of Theorem~\ref{cons:psl2B}, Theorem~\ref{th:automorphism} immediately yields the following remarkably useful result, see its extensions and applications in our subsequent papers \cite{suko12B,suko12C}.

\begin{theorem}[Small characteristic] \label{th:Frobenius} {\rm (Informal formulation)} Let $X$ be as in Theorem~{\rm \ref{cons:psl2B}}. Then there is a Monte-Carlo algorithm which constructs a map
 \begin{diagram}
 X & \rOnto^\phi & X
 \end{diagram}
that corresponds to the Frobenius automorphism $a \mapsto a^p$ of the field $\GF_q$ and leaves invariant subgroups $U$ and\/ $T$ and the elements\/ $u$ and\/ $w$ of\/ $X$.

 The running time of the algorithm is quadratic in $p$ and polynomial in   $\log q$.
\end{theorem}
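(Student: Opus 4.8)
The plan is to reduce the statement to Theorem~\ref{th:automorphism} by exhibiting the field Frobenius, at the level of the encrypted group, as a cyclic permutation of $k$ explicitly computable generating tuples. First I would run the algorithm of Theorem~\ref{cons:psl2B} to produce the strings $u,h,n$ together with the (abstract) isomorphism $\Phi\colon X \to \ppsl_2(q)$ sending them to the standard generators $\bbm 1&1\\0&1\ebm$, $\bbm t&0\\0&t^{-1}\ebm$, $\bbm 0&1\\-1&0\ebm$, where $t$ is primitive. Let $\sigma$ denote the entrywise field Frobenius $a\mapsto a^p$ acting on $\ppsl_2(q)$; it is a group automorphism. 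Set $w=n$, let $U$ be the $\Phi^{-1}$-image of the upper unitriangular subgroup and $T$ that of the diagonal torus. The crucial elementary observation is that, because the entries of $\Phi(u)$ and $\Phi(n)$ lie in the prime field, $\sigma$ fixes $\Phi(u)$ and $\Phi(n)$, while $\sigma(\Phi(h)) = \text{diag}(t^p,t^{-p}) = \Phi(h)^p = \Phi(h^p)$; hence $\sigma$ preserves $U$ and $T$ and fixes $u$ and $w$, and, most importantly, the $\sigma$-images of the chosen generators are again words in $u,h,n$ that can be written down \emph{without computing} $\Phi$.

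Next I would form, for $i=0,\dots,k-1$, the tuples $\tilde x^{(i)} = (u,\, h^{p^i},\, n)$, computing the torus powers by iterated $p$-th powering, which costs $O(k\log p)$ group multiplications and so is polynomial in $\log q$. Each tuple generates $X$: its projection is $(\Phi(u),\Phi(h)^{p^i},\Phi(n))$, and since $\gcd(p,q-1)=1$ the element $t^{p^i}$ is again primitive, whence these three matrices generate $\ppsl_2(q)$. By the identities above, the assignment $\pi(x_j^{(i)}) \mapsto \pi(x_j^{(i+1\bmod k)})$ is exactly the restriction of $\sigma$ to the generators, so it extends to the automorphism $\sigma$, which has order precisely $k$ (for $0<i<k$ one has $t^{p^i}\neq t$ by primitivity, so $\sigma^i\neq\mathrm{id}$). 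This verifies all hypotheses of Theorem~\ref{th:automorphism} with $m=3$ and $a=\sigma$.

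I would then apply Theorem~\ref{th:automorphism} verbatim: it produces the black box $Y$ generated in $X^k$ by $\bar x_1=(u,\dots,u)$, $\bar x_2=(h,h^p,\dots,h^{p^{k-1}})$, $\bar x_3=(n,\dots,n)$, a first-coordinate projection $Y\twoheadrightarrow X$ encrypting the identity of $\ppsl_2(q)$, and the cyclic shift $\alpha\colon Y\to Y$ encrypting $\sigma$, all assembled into the commutative diagram~\eqref{diag:Fr}. Reading $\phi$ off that diagram, as the shift $\alpha$ transported along the first-coordinate identification of $Y$ with $X$, yields the desired map encrypting $\sigma$; the invariance of $U,T,u,w$ is then immediate from the first paragraph. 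The running time is dominated by the call to Theorem~\ref{cons:psl2B} together with the $O(k\log p)$ multiplications and the overhead of Theorem~\ref{th:automorphism} (linear in $k$), giving the claimed bound quadratic in $p$ and polynomial in $\log q$ (respectively the $p^\ell$ bound when $p=5,7$).

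The one genuinely delicate point---and the reason the statement is labelled informal---is the passage from the cyclic shift on $Y\subseteq X^k$ to an honest map $\phi\colon X\to X$. The shift $\alpha$ is defined on $Y$, whose first-coordinate projection onto $X$ encrypts the identity but is not known to be efficiently invertible on arbitrary strings; lifting a given $x\in X$ back into $Y$ amounts to rewriting $\pi(x)$ as a word in $u,h,n$ and re-evaluating it on the twisted tuple $(u,h^p,n)$. I would therefore either (i) present $Y$ equipped with $\alpha$ as the upgraded, lighter-shade black box that legitimately replaces $X$---which is exactly what diagram~\eqref{diag:Fr} packages---or (ii) invoke the constructive membership available for $\ppsl_2(q)$ to realise $\phi$ pointwise on $X$. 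The former is what the informal formulation intends and costs nothing beyond Theorem~\ref{th:automorphism}; making the latter precise, with its cost folded into the stated running time, is the main thing a fully formal proof would have to nail down.
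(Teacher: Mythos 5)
Your proposal is correct and follows essentially the same route as the paper: observe that the entrywise Frobenius fixes $\mathbf{u}(1)$ and $\mathbf{n}(1)$ and acts as the $p$-th power map on the toral generator, then build $Y$ in $X^k$ from the twisted diagonal tuples $(u,\dots,u)$, $(h,h^p,\dots,h^{p^{k-1}})$, $(w,\dots,w)$ and invoke Theorem~\ref{th:automorphism}. Your closing discussion of the gap between the shift on $Y$ and an honest map on $X$ is a fair elaboration of why the paper labels the statement an ``informal formulation,'' but it does not change the argument, which matches the paper's.
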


\begin{proof} It suffices to observe that the action of the canonical Frobenius map
\[
F: \begin{bmatrix} a_{11}& a_{12}\\ a_{21}&a_{22}\end{bmatrix} \mapsto \begin{bmatrix} a_{11}^p& a_{12}^p\\ a_{21}^p&a_{22}^p\end{bmatrix}
\]
on the preimages of $\bar{u}, \bar{w}, \bar{h}$ in $\psl_2(q)$ and their images under the powers of the Frobenius map looks like that:
\bea
\begin{bmatrix} 1&1\\ 0&1\end{bmatrix}^{F^i} &=& \begin{bmatrix} 1&1\\ 0&1\end{bmatrix},\\ \begin{bmatrix} 0&1\\ -1&0\end{bmatrix}^{F^i} &=& \begin{bmatrix} 0&1\\ -1&0\end{bmatrix},\\ \begin{bmatrix} t^{p^l}&0\\ 0&t^{p^{-l}}\end{bmatrix}^{F^i} &=& \begin{bmatrix} t^{p^{l+i \pmod k}}&0\\ 0&t^{p^{-l-i \pmod k}}\end{bmatrix}\\ &=& \begin{bmatrix} t^{p^l}&0\\ 0&t^{p^{-1}}\end{bmatrix}^{p^i}.
\eea
Therefore the black box group $Y$ is generated in the direct product $X^k$ by elements
\bea
\bar{u} &=& (u,u,\dots,u)\\
\bar{w} &=& (w,w,\dots, w)\\
\bar{h} &=& (h,h^p,\dots, h^{p^{k-1}})
\eea
fits precisely in the construction described in Theorem~\ref{th:automorphism}.

It remains to notice that, by nature of its construction, the map $\alpha$ in Theorem~\ref{th:automorphism} leaves invariant elements $\bar{u}$, $\bar{w}$ and the torus $\bar T$ generated by $\bar{h}$ and hence leaves invariant the unipotent group $\bar{U} = \langle \bar{u}^{\bar{T}}\rangle $ and the Borel subgroup $\bar{U}\bar{T}$ of $Y$.
\end{proof}

Actually we have a more general construction of Frobenius maps on all untwisted Chevalley groups over finite field of odd characteristic; unlike Theorem~\ref{th:Frobenius}, it does not use unipotent elements.

\begin{theorem}[Known characteristic]
Let $X$ be a black box group encrypting a simple Lie type group $G=G(q)$ of untwisted type over a field of order $q=p^k$ for $p$ odd {\rm(}and known{\rm )} and $k >1$. Then we can construct, in time polynomial in $\log |G|$,
\bi
\item a black box $Y$ encrypting $G$,
\item a morphism $X \longleftarrow Y$, and
\item a morphism $\phi:Y \longrightarrow Y$
which encrypts a Frobenius automorphism of $G$ induced by the map $x \mapsto x^p$ on the field $\Ff_q$.
\ei
\end{theorem}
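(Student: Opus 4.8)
The plan is to reduce the construction to Theorem~\ref{th:automorphism}, applied to the field automorphism $F$ of $G=G(q)$ induced by $x\mapsto x^p$ on $\Ff_q$. Since $q=p^k$, this $F$ has order exactly $k$ as an automorphism of $G$, because $F^k$ induces $x\mapsto x^{q}=x$ on $\Ff_q$ and hence the identity on $G$, while $F^i$ acts nontrivially for $0<i<k$. Thus it suffices to exhibit, in polynomial time, a generating tuple $g_1,\dots,g_m$ of $X$ together with a procedure computing the iterates $F^i(g_j)$; forming the strings $\bar g_j=(g_j,g_j^{\,p},\dots,g_j^{\,p^{k-1}})$ inside $X^k$ and invoking Theorem~\ref{th:automorphism} then produces the black box $Y$, the first-coordinate morphism $Y\longrightarrow X$, and the cyclic shift $\phi=\alpha\colon Y\longrightarrow Y$ encrypting $F$.

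The key point -- and the reason no unipotent elements are needed, in contrast with Theorem~\ref{th:Frobenius} -- is that $F$ is trivially computable on a maximally split torus. First I would construct a maximally split maximal torus $T_0$ of $X$; abstractly $T_0\cong(\Ff_q^\ast)^r$, where $r$ is the Lie rank, and in the standard realisation of $G$ it is the diagonal torus, on which $F$ merely raises each diagonal entry to the $p$-th power. Hence on $T_0$ we have $F(t)=t^{p}$, which the black box computes by repeated squaring using BB2; consequently $F^{i}(t)=t^{p^{i}}$ for every $t\in T_0$.

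To reach the rest of $G$ without leaving the semisimple world, I would use conjugates of $T_0$. The crucial observation is that if $n$ is fixed by $F$ then conjugation by $n$ commutes with $F$, so $F(n^{-1}tn)=n^{-1}t^{p}n=(n^{-1}tn)^{p}$; that is, $F$ remains the $p$-th power map on the conjugate torus $T_0^{\,n}$. It is a structural fact that a bounded number of such conjugates $T_0^{\,n_1},\dots,T_0^{\,n_s}$ of a maximally split torus, by suitable $F$-fixed elements, generate $G$ (any proper overgroup of $T_0$ stable under the relevant fixed subgroup $G^{F}$ would contradict the simplicity-type structure of $G$, exactly as two distinct split tori already generate $\sl_2$). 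Choosing the $g_j$ from $T_0$ and from these conjugates, every $F^{i}(g_j)=g_j^{\,p^{i}}$ is computable by BB2, and each component tuple $(g_1^{\,p^{i}},\dots,g_m^{\,p^{i}})=(F^{i}(g_1),\dots,F^{i}(g_m))$ generates $G$ because $F^{i}$ is an automorphism; this is precisely the hypothesis of Theorem~\ref{th:automorphism}. (Alternatively one may amalgamate via Theorem~\ref{th:subgroups-automorphisms}, constructing $F$ on each $F$-invariant subgroup $\la T_0,T_0^{\,n_i}\ra$ separately and gluing.)

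The main obstacle is the apparent circularity: to conjugate by $F$-fixed elements \emph{outside} $N(T_0)$ one seems to need $F$, which is what we are constructing, and one must guarantee that the resulting cyclic map genuinely extends to an automorphism of order $k$. I expect this to be the heart of the argument. The $F$-fixed points \emph{inside} $T_0$ are exactly its elements of order dividing $p-1$, a copy of $(\Ff_p^\ast)^r$ located directly from $T_0$ using the global exponent, but these do not reach beyond $N(T_0)$; the genuine difficulty is to produce $F$-fixed conjugators outside it, equivalently to access the fixed subgroup $G^{F}\cong G(\Ff_p)$ by characteristic-$p$ structure rather than by projecting to $G$. The decisive theoretical input that closes the loop is the standard fact about Frobenius maps on untwisted Chevalley groups that the $p$-th power map on a maximally split torus is the restriction of a field automorphism of order $k$, and that such a torus together with a bounded family of $G^{F}$-conjugates generates $G$; granting this, the map reconstructed by Theorem~\ref{th:automorphism} is forced to be that field automorphism, so correctness follows. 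The remaining work -- constructing $T_0$ by random search, recognising it through its element orders dividing $q-1$ and its centraliser structure, and certifying the generation -- is routine and polynomial in $\log|G|$, while the repeated $p$-th powering contributes only a factor polynomial in $k\log p=\log q$.
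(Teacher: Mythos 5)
Your overall strategy---compute the Frobenius on pieces of $X$ where it acts as an explicit power map, then glue via Theorem~\ref{th:automorphism} or Theorem~\ref{th:subgroups-automorphisms}---is the right one, and matches the paper's strategy in outline. But the step you yourself flag as ``the heart of the argument'' is exactly where your route breaks down, and you do not fill it: to propagate the $p$-th power map from one maximally split torus $T_0$ to enough of $G$ to generate, you need conjugators lying in $G^F\cong G(\Ff_p)$, i.e.\ you need to locate a subfield subgroup inside the black box \emph{before} you have $F$. ``Granting'' the existence of a bounded family of $G^F$-conjugates of $T_0$ that generate $G$ does not give an algorithm for finding them, and random search will not produce elements of $G(\Ff_p)$ (a subgroup of negligible proportion) in polynomial time. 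There is also a secondary inaccuracy: the tori one can actually reach cheaply in a black box group (e.g.\ as maximal cyclic subgroups of involution centralizers) need not be split, and on a nonsplit $F$-stable torus the Frobenius acts as $c\mapsto c^{\epsilon p}$ with $\epsilon=\pm 1$, not as $c\mapsto c^{p}$; your argument silently assumes splitness throughout.

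The paper closes this gap with a different device that never constructs an element of $G^F$. For $\ppsl_2(q)$ it finds a Klein four-subgroup via involution-centralizer techniques; since all four-subgroups of $\psl_2(\Ff_q)$ are conjugate into the subfield subgroup $\psl_2(\Ff_p)$, one may assume the encrypted four-group $E$ is fixed by a suitable Frobenius $F$ in the right class. The maximal cyclic subgroups $C_1,C_2$ of the centralizers of two involutions of $E$ are then automatically $F$-invariant---their invariance is inherited from that of $E$, so no $F$-fixed conjugator is ever produced---and Galois cohomology pins down the action of $F$ on each $C_i$ as $c\mapsto c^{\epsilon p}$ with $p\equiv\epsilon\bmod 4$. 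Since $C_1$ and $C_2$ generate, Theorem~\ref{th:subgroups-automorphisms} amalgamates the two power maps into the desired $\phi$. For general untwisted $G$ the paper does not use a maximal torus of $G$ at all: it restricts $F$ to the root $\ppsl_2$-subgroups of a Curtis--Tits system (black boxes for which are constructed in earlier work), handles each by the rank-one argument, and amalgamates again. To salvage your torus-based approach you would need to replace the appeal to $G^F$-conjugates by an analogous ``conjugacy forces invariance'' argument for a configuration of tori you can actually construct.
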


\begin{proof}  The proof is based on two applications of Theorem~\ref{th:subgroups-automorphisms}. First we consider the case when $X$ encrypts $\psl_2(\Ff_q)$. Using the standard technique for dealing with involution centralizers, we  can find in $X$ a $4$-subgroup $V$; let $E$ be the subgroup in $G=\psl_2(q)$ encrypted by $V$. Since all $4$-subgroups in $\psl_2(\Ff_q)$ are conjugate to a subgroup in $\psl_2(\Ff_p)$, we can assume without loss of generality that $E$ belongs to a subfield subgroup $H= \psl_2(\Ff_p)$ of $G$ and therefore $E$ is fixed by a Frobenius map $F$ on $G$. Now let $e_1$ and $e_2$ be two involutions in $E$, and $C_1$ and $C_2$ maximal cyclic subgroups in their centralizers in $G$; notice that $C_1$ and $C_2$ are conjugate by an element from $H$ and are $F$-invariant.

It follows from the basic Galois cohomology considerations that  $F$ acts on $C_1$ and $C_2$ as power maps $\alpha_i : c \mapsto c^{\epsilon p}$ for $p \equiv \epsilon \bmod 4$. If now we take images $X_i$ of groups $C_i$, we see that the morphisms $\phi_i: x \mapsto  x^{\epsilon p}$ of $X_i$ encrypt restrictions of $F$ to $C_i$. Obviously, $X_1$ and $X_2$ generate a black box $Y \longrightarrow X$, and we can use Theorem~\ref{th:subgroups-automorphisms} to amalgamate $\phi_1$ and $\phi_2$ into a morphism $\phi$ which encrypts $F$.

As usual, for groups $\sl_2(q)$ the same result can be achieved by essentially the same arguments as for $\psl_2(q)$. Moving to other untwisted Chevalley groups, we apply amalgamation to (encryptions of) restrictions of a Frobenius map on $G$ to (encryptions in $X$) of a family of root $\ppsl_2$-subgroups $K_i$ in $G$ forming a Curtis-Tits system in $G$ (and therefore generating $G$). Black boxes for Curtis-Tits system in classical groups of odd characteristic are constructed in \cite{suko03}, in exceptional groups in \cite{suko05}. This completes the proof. \end{proof}

\section{Oracles and revelations}
\label{sec:oracle}

In this section, we revise the classification of black box group problems and briefly discuss the role of ``oracles''.

\subsection{Monte-Carlo and Las Vegas}

This is a brief reminder of two canonical concepts for the benefit of those readers who came from the pure group theory rather than computational group theory background.

A Monte-Carlo algorithm is a randomized algorithm which gives a correct output to a decision problem with probability strictly bigger than $1/2$. The probability of having incorrect output can be made arbitrarily small by running the algorithm sufficiently many times. A Monte-Carlo algorithm with outputs ``yes'' and ``no'' is called one-sided if the output ``yes'' is always correct. A special subclass of Monte-Carlo algorithm is a Las Vegas algorithm which either outputs a correct answer or reports failure (the latter with probability less than $1/2$). The probability of having a report of failure is prescribed by the user. A detailed comparison of Monte-Carlo and Las Vegas algorithms, both from practical and theoretical point, can be found in Babai's paper \cite{MR1444127}.

\subsection{Constructive recognition}

We shall outline an hierarchy of typical black box group problems.

\begin{description}
\item[\textbf{Verification Problem}] Is the unknown group encrypted by a black box group $X$ isomorphic to the given group $G$ (``target group'')?

\item[\textbf{Recognition Problem}] Determine the isomorphism class of the group encrypted by $X$.
\end{description}

The Verification  Problem arises as a sub-problem within more complicated Recognition Problems. The two problems have dramatically different complexity. For example, the celebrated Miller-Rabin algorithm \cite{rabin80.128} for testing primality of the given odd natural number $n$ in nothing else but a  black box algorithm for solving the verification problem for the multiplicative group   $\mathbb{Z}/n\mathbb{Z}^*$ of residues modulo $n$ (given by a simple black box: take your favorite random numbers generator and generate random integers between $1$ and $n$) and the cyclic group $\mathbb{Z}/(n-1)\mathbb{Z}$ of order $n-1$ as the target group. On the other hand, if $n=pq$ is the product of primes $p$ and $q$, the recognition problem for the same black box group means finding the direct product decomposition
\[
\mathbb{Z}/n\mathbb{Z}^* \cong \mathbb{Z}/(p-1)\mathbb{Z} \oplus \mathbb{Z}/(q-1)\mathbb{Z}
\]
which is equivalent to factorization of $n$ into product of primes.

The next step after finding the isomorphism type of the black box group $X$ is

\begin{description}
\item[Constructive Recognition] Suppose that a black box group $X$ encrypts a concrete and explicitly given group $G$. Rewording a definition given in \cite{brooksbank08.885},
    \begin{quote}
        \emph{ The goal of a constructive recognition algorithm is
to construct an effective isomorphism $\Psi: G \longrightarrow X$. That is, given $g\in G$, there is an efficient procedure to construct a string $\Psi(g)$ encrypting $g$  in $X$ and given a string $x$  produced by $X$, there is an efficient procedure to construct the element $\Psi^{-1}(x) \in G$ encrypted by $X$.}
    \end{quote}
\end{description}

However, there are still no really efficient constructive recognition algorithms for black box groups $X$ of (known) Lie type over a finite field of large order $q=p^k$. The first computational obstacles for known algorithms
\cite{brooksbank01.79,brooksbank03.162,brooksbank08.885,brooksbank01.95,brooksbank06.256,celler98.11,conder06.1203,leedham-green09.833} are the need to construct unipotent elements in black box groups, \cite{brooksbank01.79,brooksbank03.162,brooksbank08.885,brooksbank06.256,brooksbank01.95,celler98.11} or to solve discrete logarithm problem for matrix groups \cite{conder01.113,conder06.1203,leedham-green09.833}.

Unfortunately,
the proportion of the unipotent elements in $X$ is $O(1/q)$ \cite{guralnick01.169}. Moreover the probability that the order of a random element is divisible by $p$ is also $O(1/q)$, so one has to make  $O(q)$ (that is, \emph{exponentially many}, in terms of the input length $O(\log q)$ of the black boxes and the algorithms) random selections of elements in a given group to construct a unipotent element. However, this brute force approach is still working for small values of $q$, and  Kantor and Seress \cite{kantor01.168} used it to develop an algorithm for recognition of black box classical groups. Later the algorithms of \cite{kantor01.168} were upgraded to polynomial time constructive recognition algorithms \cite{brooksbank03.162, brooksbank08.885, brooksbank01.95, brooksbank06.256} by assuming the availability of additional \emph{oracles}:
 \begin{itemize}
 \item the \emph{discrete logarithm oracle} in $\mathbb{F}_q^*$, and
  \item the  \emph{$\sl_2(q)$-oracle}.
   \end{itemize}
Here, the \emph{$\sl_2(q)$-oracle} is a procedure for constructive recognition of $\sl_2(q)$; see  discussion in \cite[Section~3]{brooksbank08.885}.

\begin{quote}
\textbf{\emph{We emphasize that in this and subsequent papers we are using neither the discrete logarithm oracle in $\mathbb{F}_q^*$ nor the  $\sl_2(q)$-oracle.}}
\end{quote}

\subsection{On oracles and revelations: an example from even characteristic}

We have to admit that the concept of constructive recognition modulo the use of unrealistically powerful oracles makes us uncomfortable. We feel that the use of excessively powerful and blunt tools leads to loss of essential (and frequently very beautiful) theoretical details. Instead, we propose to use all ``\emph{fifty shades of black}'' and exploit all available gradations of black (that is, a subtler hierarchy of complexity of black box problems)  in  development of practically useful algorithms.  Our papers \cite{suko12A,suko12B,suko12C} provide a number of concrete examples where this alternative approach has happened to be fruitful.

In the present paper, we wish to dispel some mystic of the $\sl_2(q)$-oracle by analyzing the structure of the black box group $X$ encrypting $\sl_2(2^n)$ using formally a more modest assumption: that we are given an involution $r \in X$. We shall say that $r$ is obtained \emph{by revelation}, to acknowledge that this assumption is quite unnatural in practical applications.

Still, we feel that there is a difference between a revelation or epiphany (which, by their nature, are non-reproducible, unique events) and an appeal to an oracle; indeed, there is an implicit assumption that the oracle can be approached for advice again and again.

\begin{theorem}[Small characteristic]
Let $X$ be a black box group  encrypting $\sl_2(2^n)$ for some \textup{(}perhaps unknown\textup{)} $n$. We assume that we  are given an involution $u \in X$.

Then there exists a Monte-Carlo algorithm which constructs, in  polynomial in $l(X)$ time,
 \bi
 \item a black box field\/ $\uu$ encrypting\/ $\Ff_{2^n}$,  and
 \item  a  polynomial in $l(X)$ time isomorphism
\[\Phi: \sl_2(\uu) \longrightarrow X.\]
\ei
\label{th:sl(2,2n)}
\end{theorem}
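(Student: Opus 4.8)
The plan is to realise $X$ as $\sl_2$ over a field reconstructed from the single datum $u$, in the spirit of the ``shades of black'' programme: first manufacture the standard subgroups of $\sl_2(2^n)$ inside $X$, then read the field off the action of a torus on a root subgroup, and finally turn the Steinberg presentation into an effective isomorphism. Write $q=2^n$, and recall that in characteristic $2$ the involutions of $\sl_2(q)$ are exactly the transvections, and the centraliser of a transvection is the whole root subgroup containing it, elementary abelian of order $q$. First I would use the given involution $u$ together with the standard Monte-Carlo involution-centraliser machinery (Bray's method: for random $g$, harvest elements of $C_X(u)$ from the dihedral groups $\la u,u^g\ra$) to construct $U:=C_X(u)$, so that $U\cong(\Ff_q,+)$ and $u$ is a distinguished nonzero element which we declare to be the field identity $1$. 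Next, for generic random $g$ the conjugate $V:=U^g$ is opposite to $U$ with $\la U,V\ra=X$; from the interplay of $U$ and $V$ I would recover a torus $T=N_X(U)\cap N_X(V)$, cyclic of order $q-1$, and a Weyl element $w$ with $U^w=V$ and $u^w=:v(1)\in V$. Choosing a random $\tau\in T$, its conjugation action $A\colon x\mapsto x^\tau$ on $U$ has, with high probability (a property one can test and, if necessary, re-sample for), an $\Ff_2$-irreducible minimal polynomial of degree $n$; equivalently $\tau$ corresponds to an element $\zeta\in\Ff_q^{\times}$ generating $\Ff_q$ over $\Ff_2$, and $U$ becomes a cyclic irreducible $\Ff_2\la A\ra$-module.

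The field is then extracted as follows. I would compute the minimal polynomial $m(x)\in\Ff_2[x]$ of $A$ on $U$; it is irreducible of degree $n$, so $\uu:=\Ff_2[x]/(m(x))$ is an explicit field (a very light shade of black) isomorphic to $\Ff_q$, with $x\mapsto\zeta$. The power basis furnishes an $\Ff_2$-isomorphism $\uu\to U$ sending $\sum_i\epsilon_i x^i\mapsto\prod_i(\tau^i u\tau^{-i})^{\epsilon_i}$; thus from any field element $a$ one produces the string $u(a)\in U$, and $v(a):=u(a)^w$, in time polynomial in $n$.

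For the isomorphism I would build $\Phi\colon\sl_2(\uu)\to X$ directly. The key point, which sidesteps any discrete logarithm, is the characteristic-$2$ identity $h(t)=u(t)\,v(t^{-1})\,u(t)\,w$, which manufactures every toral element from the field value $t$ (using that $t\mapsto t^{-1}$ is cheap in $\uu$); combined with $u(\cdot)$ and $v(\cdot)$ this realises all root and toral elements. Given an arbitrary matrix in $\sl_2(\uu)$, Gaussian elimination (Bruhat decomposition) writes it as a short product of such factors, and $\Phi$ sends it to the corresponding product in $X$. Since the images $U,V,T,w$ satisfy the defining Steinberg relations of $\sl_2(q)$, this $\Phi$ is the effective form of the abstract isomorphism $X\cong\sl_2(q)$ we are promised. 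The inverse map $\Phi^{-1}$ is obtained by running the Bruhat decomposition inside $X$ and reading the resulting $U$- and $T$-parts back into $\uu$ (a toral value is recovered by conjugating $u$ and extracting the square root $a\mapsto a^{2^{n-1}}$, which is cheap in $\uu$).

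The hard part will be the linear algebra hidden in the field extraction and in the $U$-readings of $\Phi^{-1}$: computing the minimal polynomial $m$ (equivalently, coordinates with respect to the power basis) in the black box elementary abelian group $U$ in time polynomial in $n$ and \emph{without} a discrete-logarithm oracle, since naive enumeration of $U$ is exponential in $n$. The leverage I would exploit is the module structure: because $U$ is an irreducible $\Ff_2\la A\ra$-module, its only $A$-invariant subgroups are $1$ and $U$, so the Krylov chain $\la u,\tau u\tau^{-1},\dots\ra$ strictly ascends and first becomes $A$-invariant exactly at dimension $n$, which pins down both $n$ and the single dependence defining $m$. Converting this structural fact into a provably polynomial membership-and-dependence routine is the crux of the whole argument, and it is precisely here that the characteristic being small ($p=2$) and the randomisation in the choice of $\tau$ do the real work.
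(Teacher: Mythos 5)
Your construction of $U=C_X(u)$, the opposite subgroup $V$, the torus $T=N_X(U)\cap N_X(V)$ and the Weyl element is sound and matches the opening steps of the paper's proof. But the step you yourself flag as ``the crux'' --- computing the minimal polynomial of $\tau$ on $U$ and, later, reading coordinates of elements of $U$ with respect to the power basis $u,\,\tau u\tau^{-1},\dots$ --- is a genuine gap, not a technicality to be converted into a routine. What you need there is constructive membership/linear algebra over $\Ff_2$ inside a black box elementary abelian group of order $2^n$: deciding whether $A^nu$ lies in $\langle u,Au,\dots,A^{n-1}u\rangle$ and finding the dependence means searching over $2^n$ subsets, and no polynomial-time method for this is known (it is essentially the same obstruction as the discrete-logarithm-type problems you are trying to avoid). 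The irreducibility of $U$ as an $\Ff_2\langle A\rangle$-module tells you the Krylov chain terminates at dimension $n$, but it gives you no effective test for \emph{when} it terminates or \emph{which} relation holds. Your proposed $\Phi^{-1}$ inherits the same problem: ``reading the $U$-part back into $\uu$'' is exactly a coordinate computation.

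The paper sidesteps this entirely, and the idea it uses is the one missing from your proposal. Rather than coordinatising $U$ over $\Ff_2$, it defines the black box field $\uu$ so that its additive group \emph{is} $U$ (with the group operation as $\oplus$) and its multiplicative group is the graph $\{(t,s): r^t=s\}$ of the regular action of $\TT=B/U$ on $U\smallsetminus\{1\}$; multiplication is then composition in $\TT$. The enabling algorithm is the ``double conjugation trick'': given involutions $s,t\in U$, pick a random conjugate $r'=s^z$ with $sr'$ and $r't$ of odd order, and then $x=(sr')^{2^{2n-1}}(r't)^{2^{2n-1}}$ satisfies $s^x=t$; this produces, in polynomial time and with no linear algebra, the unique coset of $U$ in $B$ carrying $r$ to any prescribed nonidentity element of $U$. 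That single computation makes both $\oplus$ and $\otimes$ polynomial-time and is what the whole theorem rests on. Note also that the theorem only asks for a \emph{black box} field $\uu$, not the explicit field $\Ff_2[x]/(m(x))$ you aim for; once the black box field exists, an explicit isomorphism with $\Ff_{2^n}$ follows from Maurer--Raub since the prime field is $\Ff_2$, but you cannot get at the explicit field first and the black box field second in the way you propose.
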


\subsection{Structure recovery}
\label{ssec:structure-recovery}

Theorem~\ref{th:sl(2,2n)} is an example of a class of results which we call \emph{structure recovery theorems}.\footnote{We extend our definition from \cite{suko12A} where it refers to a special case of the present one.}

Suppose that a black box group $X$ encrypts a concrete and explicitly given group $G=G(\Ff_q)$ of Chevalley type $G$ over a explicitly given finite field $\Ff_q$. To achieve \emph{structure recovery} in $X$ means to construct, in probabilistic polynomial time in $\log |G|$,
    \bi
     \item a black box field $\KK$ encrypting $\Ff_q$, and
     \item a probabilistic polynomial time morphism $$\Psi: G(\KK) \longrightarrow X.$$
    \ei

This new concept requires a detailed discussion.

Recall that simple algebraic groups (in particular, Chevalley groups over finite fields) are understood in the theory of algebraic groups as functors from the category of unital commutative rings into the category of groups; most structural properties of a Chevalley group are encoded in the functor; the field mostly provides the flesh on the bones. Remarkably, this separation of flesh from the bones is  very prominent in the black box group theory. Here, we wish to mention a few from many constructions from our subsequent paper  \cite{suko12B}  which illustrate this point.

\begin{theorem}[Known characteristic] \label{th:sun-in-sln} \textbf{\cite{suko12B}}
Let $X$ be a black box group encrypting the group $\sl_n(q^2)$ for $q$ odd, $q=p^k$ for some $k$ (perhaps unknown) and a known prime number $p$. Then we can construct, in time polynomial in $\log q$ and $n$, a black box group $Y$   encrypting the group $\su_n(q)$ and a morphism $Y \hookrightarrow X$. If in addition $n$ is even and $n =2m$, we can do the same with a black box group $Z$ encrypting $\sp_{2m}(q)$ and a morphism $Z \hookrightarrow Y$.
\end{theorem}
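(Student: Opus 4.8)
The plan is to realize $\su_n(q)$ as the fixed-point subgroup of an involutory automorphism of $\sl_n(q^2)$ and to construct that automorphism, together with \emph{explicit generators} of its fixed-point subgroup, via the amalgamation machinery of Theorem~\ref{th:subgroups-automorphisms}. Recall that $\su_n(q)=\mathrm{Fix}(\sigma)$, where $\sigma=\gamma\circ\tau$ is the unitary automorphism of $G=\sl_n(q^2)$: here $\tau$ is the field automorphism induced by $x\mapsto x^q$ on $\Ff_{q^2}$ and $\gamma$ is the graph automorphism $g\mapsto(g^{-1})^{\top}$, so that $\sigma(g)=\bar g^{-\top}$ and $\sigma(g)=g$ encodes preservation of the standard Hermitian form. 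Thus the first task is to put $\tau$ and $\gamma$ on $X$ as morphisms $X\to X$. I would build the field part from the known-characteristic Frobenius construction for untwisted Chevalley groups stated above, which (on a lighter shade $Y_0\to X$) produces a morphism $F$ encrypting $x\mapsto x^p$ of $\Ff_{q^2}=\Ff_{p^{2k}}$; since that construction uses only power maps on cyclic tori, it does not require knowing $k$. As $F$ generates $\mathrm{Gal}(\Ff_{p^{2k}}/\Ff_p)$ and acts faithfully on $G$, its order as a morphism is exactly $2k$, so I would recover $k$ by computing this order (apply $F$ repeatedly to a generating tuple and test triviality with BB3); this costs $O(\log q)$ steps, after which I set $\tau:=F^{k}$, the involution encrypting $x\mapsto x^{q}$.

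Next I would construct the graph automorphism $\gamma$. Using the Curtis--Tits constructions in $X$ from \cite{suko03}, I would obtain the root $\sl_2(q^2)$-subgroups $K_1,\dots,K_{n-1}$ together with their rank-two joins, which generate $X$; the graph automorphism reverses the Dynkin diagram, sending $K_i$ to $K_{n-i}$ by an inverse-transpose isomorphism. Matching each pair $K_i,K_{n-i}$ by rank-one constructive recognition and amalgamating the resulting local isomorphisms through Theorem~\ref{th:subgroups-automorphisms} yields $\gamma$, hence $\sigma=\gamma\tau$ as a morphism $X\to X$ with $\su_n(q)=\mathrm{Fix}(\sigma)$.

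The main obstacle is the final step: turning the abstractly defined set $\mathrm{Fix}(\sigma)$ into a genuine black box $Y$ satisfying BB1. Because $[G:\su_n(q)]\approx q^{\,n^2-1}$ is astronomically large, one cannot sample $\su_n(q)$ by generating random elements of $X$ and rejecting those moved by $\sigma$: membership in $\mathrm{Fix}(\sigma)$ is decidable (compute $\sigma(g)$ and compare to $g$ via BB3), but the acceptance probability is exponentially small. Instead I would exhibit explicit generators. The automorphism $\sigma$ permutes the Curtis--Tits data, and its fixed configuration is a Phan system of $\su_n(q)$ consisting of rank-one and rank-two twisted subgroups ($\sl_2(q)\cong\su_2(q)$, $\su_3(q)$, and their rank-two joins). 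Each such subgroup lies either in a $\sigma$-invariant $K_i$ or in a diagonal of a swapped pair $K_i,K_{n-i}$; on these $\sigma$ restricts to a field- or unitary-involution, and known-characteristic constructive recognition at the $\sl_2/\su_3$ level lets me write down the $\Ff_q$-rational generators as strings of $X$ (for instance by selecting the $\Ff_q$-points of a recognized root subgroup). Setting $Y=\langle\,\text{these Phan subgroups}\,\rangle$, Phan's theorem for $\su_n(q)$ guarantees that they generate the whole group, so $Y$ encrypts $\su_n(q)$ and the inclusion of generators furnishes the embedding $Y\hookrightarrow X$. Verifying correctness of this amalgam, and treating the small-rank exceptions to Phan's theorem separately, is where the argument must be carried out with care.

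Finally, the symplectic case is handled by the same method one level down. Writing $n=2m$ and $Y$ for the black box just built for $\su_{2m}(q)$, the group $\sp_{2m}(q)$ is the fixed-point subgroup $\mathrm{Fix}(\beta)$ of an involutory (symplectic symmetric) automorphism $\beta$ of $\su_{2m}(q)$. I would construct $\beta$ on $Y$ exactly as above—amalgamating the relevant diagram data, now for the folded type $C_m$—and then generate $\mathrm{Fix}(\beta)=\sp_{2m}(q)$ from its type-$C_m$ Phan system of $\sl_2(q)$-subgroups sitting inside $Y$, obtaining $Z$ and the embedding $Z\hookrightarrow Y$. Throughout, every ingredient—the Frobenius construction, the order computation recovering $k$, the rank-$\le 2$ recognitions, and the amalgamations—runs in time polynomial in $\log q$ and $n$, which yields the stated complexity.
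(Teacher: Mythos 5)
This paper never proves Theorem~\ref{th:sun-in-sln}: it is imported verbatim from the companion paper \cite{suko12B} as an illustration of ``structure recovery'', so your proposal can only be checked against the paper's description of how that proof is supposed to go. At the strategic level you are on the right track and your diagnosis of the central difficulty is exactly right: $\su_n(q)$ is the fixed-point subgroup of the involutory twisted automorphism $\sigma=\gamma\tau$ of $\sl_n(q^2)$, $\tau$ comes from the known-characteristic Frobenius construction, $\gamma$ from amalgamation over a Curtis--Tits system, and $\mathrm{Fix}(\sigma)$ has index roughly $q^{n^2-1}$, so rejection sampling cannot furnish BB1 for $Y$.

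The gap is in how you resolve that difficulty. Your route --- Phan-system generators extracted by ``rank-one constructive recognition'' of $\sl_2/\su_3$ subgroups and by ``selecting the $\Ff_q$-points of a recognized root subgroup'' --- contradicts both the advertised character of the proof (Section~\ref{ssec:structure-recovery} insists these arguments ``never refer to the ground fields of groups and do not involve any computations with unipotent elements'') and, more seriously, the complexity claim. The theorem is labelled \emph{Known characteristic}, i.e.\ polynomial in $\log q$ with no bound on $p$; but in this series constructive recognition of the rank-one groups without a discrete-log or $\sl_2$-oracle is available only in \emph{small} characteristic (Theorem~\ref{cons:psl2B} is quadratic in $p$), so your construction of $\gamma$ and of the $\Ff_q$-rational generators does not run in the stated time. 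The standard repair --- and the tool this series of papers leans on throughout, cf.\ the reference to \cite{bray00.241} --- is to apply Bray's centralizer machinery directly to the involutory automorphism $\sigma$, which Theorem~\ref{th:automorphism} realizes as a computable unary operation: for random $g\in X$ the element $c=g^{-1}\sigma(g)$ is inverted by $\sigma$, and when $c$ has odd order $m$ the element $g\,c^{(m+1)/2}$ lies in, and is uniformly distributed over, $C_X(\sigma)=\mathrm{Fix}(\sigma)$ (while $c^{m/2}$ for even $m$ yields $\sigma$-fixed involutions). This gives BB1 for $Y$ with no Phan amalgam, no unipotent elements and no field computations, and the same device one level down produces $Z$ encrypting $\sp_{2m}(q)$ inside $Y$. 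A secondary gap: Theorem~\ref{th:subgroups-automorphisms} as stated requires the subgroups to be $\alpha$-invariant, whereas $\gamma$ sends $K_i$ to $K_{n-i}$; you must either amalgamate over the invariant joins $\langle K_i,K_{n-i}\rangle$ or prove a permuted variant of that theorem, and this is glossed over.
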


An important feature of the proofs of this and other similar results in \cite{suko12B} is that they never refer to the ground fields of groups and do not involved any computations with unipotent elements. In fact, we interpret morphisms between functors
\[
\sp_{2m}(\cdot)  \hookrightarrow \su_n(\cdot) \hookrightarrow \sl_n(\cdot^2).
\]
within  our black boxes.

This example shows that a modicum of categorical language is useful for the theory as well as for its implementation in the code since it suggests a natural structural approach to development of the computer code.

Another example of a ``category-theoretical'' approach is provided by a very elementary, but also very important observation that the graph of a group homomorphism $G \longrightarrow H$  is a subgroup of $G \times H$. Therefore it is natural to identify a morphism $\mu:X \longrightarrow Y$ of black box groups with its graph $M < X \times Y$. In its turn, the black box $M$ is a morphism $M \longrightarrow X\times Y$. In practice this could  mean (although in some cases a more sophisticated construction is used) that we take some strings $x_1,\dots,x_k$ generating $X$ and their images $y_1=\mu(x_1),\dots, y_k=\mu(x_k)$ in $Y$ and use the product replacement algorithm to run a black box for the subgroup \[M = \langle (x_1,y_1),\dots,(x_k,y_k)\rangle \leqslant X \times Y\] which is of course exactly the graph $\{\, (x,\mu(x))\,\}$ of the homomorphism $\mu$. Random sampling of the black box $M$ returns strings $x\in X$ with their images $\mu(x)\in Y$ already attached. This doubles the computational cost of the black box for $X$, but allows us to do constructions like the following one.

\begin{theorem}[Known characteristic] \textbf{\cite{suko12B}}
\label{th:G2-in-SL8}
Let $X$ be a black box group encrypting the group $\sl_8(F)$ for a field $F$ of (unknown) odd order $q = p^k$ but known $p= {\rm char}\,F$.  Then we can construct, in time polynomial in $\log |F|$, a chain of black box groups and morphisms
\[
U \hookrightarrow V \hookrightarrow W \hookrightarrow X
\]
that encrypts the chain of canonical embeddings
\[
\mathop{G_2}(F) \hookrightarrow \so_7(F) \hookrightarrow\so_8^+(F) \hookrightarrow \sl_8(F).
\]
\end{theorem}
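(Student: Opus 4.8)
The plan is to realize the whole chain through the triality of $D_4 = \so_8^+$, building each group as the fixed-point set of a graph automorphism and accessing that fixed-point set through a Curtis--Tits system of symmetrized root $\ppsl_2$-subgroups. The underlying structure is classical: the natural $8$-dimensional module gives $\so_8^+(F)\hookrightarrow\sl_8(F)$; the fixed points of an involutory (duality) graph automorphism $\sigma$ form a $B_3 = \so_7(F)$; and the fixed points of the order-$3$ triality $\tau$ form $G_2(F)$. Moreover $\sigma,\tau$ can be chosen in a common $S_3 \leqslant \mathop{\rm Out}(\so_8^+)$ with $G_2=(\so_8^+)^{\tau}$ centralized by $\sigma$, whence $G_2\leqslant(\so_8^+)^{\sigma}=\so_7$ and the inclusions are nested as required.

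First I would construct $\so_8^+(F)$ as a black box $W$ inside $X$. Using \cite{suko03} I build a Curtis--Tits system of type $A_7$ (seven root $\ppsl_2$-subgroups) in $X$, and then construct the orthogonal graph automorphism $\theta$ of $\sl_8$, namely the diagram-reversing map $g\mapsto J(g^{-1})^{T}J^{-1}$ with symmetric $J$ of $+$ type, by the same amalgamation argument used for $\sp$ and $\su$ in Theorem~\ref{th:sun-in-sln} and \cite{suko12B}. The $\theta$-symmetrized root subgroups then generate the fixed-point group $W\cong\so_8^+(F)$, equipped with its own Curtis--Tits system of type $D_4$ with central node $K_0$ and outer nodes $K_1,K_2,K_3$. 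The only difference from the $\su$ case of Theorem~\ref{th:sun-in-sln} is the absence of a Frobenius twist, since here the form is defined over the same field $F$.

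Next I would produce the graph automorphisms of $W$ via Theorem~\ref{th:automorphism}. For $\sigma$ I take $k=2$ and the two generating tuples obtained by swapping the labels of $K_1,K_2$ while fixing $K_0,K_3$; for $\tau$ I take $k=3$ and the three tuples obtained by cyclically permuting $K_1\to K_2\to K_3$ while fixing $K_0$. In each case the relabeling extends to the corresponding graph automorphism of $D_4$, so the hypotheses of Theorem~\ref{th:automorphism} are satisfied and I obtain black box morphisms encrypting $\sigma$ and $\tau$. The fixed-point subgroups are then generated by symmetrized Curtis--Tits data. Since the three outer nodes of $D_4$ are pairwise non-adjacent, $K_1,K_2,K_3$ commute; thus $\so_7 = W^{\sigma}$ has a type-$B_3$ system whose short-root $\ppsl_2$ is the $\sigma$-diagonal $\{x\,x^{\sigma}:x\in K_1\}$ and whose remaining nodes are $K_0,K_3$, while $G_2 = W^{\tau}$ has a type-$G_2$ system whose long-root $\ppsl_2$ is $K_0$ and whose short-root $\ppsl_2$ is the twisted diagonal $\{x\,x^{\tau}\,x^{\tau^2}:x\in K_1\}$ --- precisely the twisted-diagonal embedding $d$ of diagram~(\ref{diag:Fr}) with $a=\tau$. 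Setting $V=\langle\,\text{the }B_3\text{ system}\,\rangle$ and $U=\langle\,\text{the }G_2\text{ system}\,\rangle$ yields the morphisms $U\hookrightarrow V\hookrightarrow W\hookrightarrow X$ encrypting the canonical embeddings; all steps run in time polynomial in $\log|F|$ since the rank is fixed.

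The main obstacle is the genuinely exceptional step of realizing the order-$3$ triality $\tau$ as a bona fide black box morphism. To invoke Theorem~\ref{th:automorphism} I must align the field parametrizations of the three outer root subgroups $K_1,K_2,K_3$ so that the cyclic relabeling really extends to an automorphism of order $3$: an arbitrary identification $K_1\cong K_2\cong K_3$ need not respect the Chevalley commutator relations with $K_0$, and a wrong alignment produces a map failing the extension hypothesis. Making the structure constants consistent in odd characteristic is the crux, and the case $p=3$ (where the long-to-short root ratio of $G_2$ interacts with the characteristic) deserves separate attention. Once $\tau$ is correctly constructed, extracting the fixed-point systems and verifying the inclusions is routine Curtis--Tits bookkeeping, supported by the black box Curtis--Tits machinery of \cite{suko03,suko05}.
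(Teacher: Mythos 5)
The first thing to say is that this paper does not actually prove Theorem~\ref{th:G2-in-SL8}: the statement is imported from the companion paper \cite{suko12B} and is quoted here only to illustrate the ``structure recovery'' and ``morphisms of functors'' philosophy of Section~\ref{ssec:structure-recovery}. So there is no in-paper proof to measure your argument against; the best one can do is check your proposal against the methodological commitments the paper does make, namely that the constructions of \cite{suko12B} are field-free, avoid unipotent elements, and are built from black box Curtis--Tits systems \cite{suko03,suko05} together with the amalgamation machinery of Theorems~\ref{th:automorphism} and~\ref{th:subgroups-automorphisms}. On that score your skeleton looks right: realize $\so_8^+$ as the fixed points of a duality automorphism of $\sl_8$ obtained by amalgamating its restrictions to an $A_7$ Curtis--Tits system (exactly parallel to the treatment of $\su_n\hookrightarrow \sl_n(q^2)$ and $\sp_{2m}\hookrightarrow\su_{2m}$ in Theorem~\ref{th:sun-in-sln}), then cut out $\so_7$ and $G_2$ as fixed points of the $S_3$ of graph automorphisms of $D_4$, with the short-root $\ppsl_2$-subgroups arising as (twisted) diagonals in the commuting product of the three outer nodes. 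The paper's aside that the embedding ${}^3{\rm D}_4(q)\hookrightarrow \so_8^+(q^3)$ is ``also done in \cite{suko12B}'' strongly suggests that triality is indeed constructed there as a black box morphism, as you propose.

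The one place where your argument is genuinely incomplete --- and you flag it yourself --- is the verification of the extension hypothesis of Theorem~\ref{th:automorphism} for the order-$3$ relabeling: you must exhibit identifications of $K_1$, $K_2$, $K_3$ under which the cyclic permutation, together with the identity on $K_0$, extends to an automorphism of order $3$ of $\so_8^+(F)$, and an arbitrary choice of isomorphisms between the $K_i$ will not do. This is not a formality; it is the entire content of constructing triality, and it needs an explicit procedure (for instance, conjugating the $K_i$ into a common position by elements of the extended Weyl group inside $W$, or pinning down their parametrizations via commutator relations with $K_0$) before Theorem~\ref{th:automorphism} can be invoked. Two smaller points to watch: the subgroup generated by symmetrized root subgroups is $\Omega_8^+(F)$ rather than ${\rm SO}_8^+(F)$ (for odd $q$ these differ by index $2$), so you should be precise about which isogeny types the ``canonical embeddings'' refer to; and the black box produced by Theorem~\ref{th:automorphism} lives in $W^3$ rather than in $W$, so the fixed-point subgroup of the cyclic shift must be pushed back through the projection onto the first coordinate to obtain $U$ and $V$ as genuine black box subgroups of $X$.
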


Again, these our constructions  (and even the embedding $^3{\rm D}_4(q) \hookrightarrow  \so_8^+(q^3)$,  also done in \cite{suko12B}) are ``field-free'' and, moreover, ``characteristic-free''.

Another aspect of the concept of ``structure recovery'' is that it follows an important technique from the model-theoretic algebra: interpretability of one algebraic structure in another, see, for example, \cite{borovik94-BN}. Construction of a black box field in a black box group in Theorems~\ref{th:sl(2,2n)} and \ref{cons:psl2} closely follows this model-theoretic paradigm.

\subsection{Black box fields}
\label{ssec:bbf}

We define black box fields by analogy with black box groups, the reader may wish to compare the  exposition in this section with \cite{boneh96.283}.

A \emph{black box} (finite) \emph{field} $\KK$ is an oracle or an algorithm operating on $0$-$1$ strings of uniform length (input length) which encrypts a field of known characteristic $p$. The oracle can compute $x+y$, $xy$ and compares whether $x=y$ for any strings $x,y \in \kk$.  We refer the reader to \cite{boneh96.283, maurer07.427} for more details of black box fields and their applications to cryptography.

In this paper, we shall be using some results about the isomorphism problem of black box fields \cite{maurer07.427}, that is, the problem of constructing an isomorphism and its inverse between $\kk$ and an explicitly given finite field $\Ff_{p^n}$. The explicit data for a finite field of cardinality $p^n$ is defined to be a system of {\emph{structure constants} over the prime field, that is} $n^3$ elements $(c_{ijk})_{i,j,k=1}^n$ of the prime field {$\mathbb{F}_p = \mathbb{Z}/p\mathbb{Z}$ (represented as integers in $[0,p-1]$)} so that $\mathbb{F}_{p^n}$ becomes a field with ordinary addition and multiplication by elements of $\mathbb{F}_p$ and multiplication is determined by
\[
s_i s_j =\sum_{k=1}^n c_{ijk}s_k,
\]
where $s_1, s_2, \dots, s_n$ denotes a basis of $\mathbb{F}_{p^n}$ over $\mathbb{F}_p$. The concept of explicitly given field of order $p^n$ is robust; indeed, Lenstra Jr.\  has shown in \cite[Theorem 1.2]{lenstra91.329} that for any two fields $A$ and $B$ of order $p^n$ given by two sets of structure constants $(a_{ijk})_{i,j,k=1}^n$ and $(b_{ijk})_{i,j,k=1}^n$ an isomorphism $A \longrightarrow B$ can be constructed in polynomial in $n\log p$ time.

Maurer and Raub  \cite{maurer07.427} proved  that the isomorphism problem for a black box field $\kk$ and an explicitly given field $\Ff_{p^n}$ is reducible in polynomial time to the same problem for the prime subfield in $\kk$ and $\Ff_p$. Hence, for small primes $p$, one can construct an isomorphism between $\KK$ and $\Ff_{p^n}$ in time polynomial in $n\log p$ and linear in $p$.

In our construction of a black box field, we use the so called \textit{primitive prime divisor} elements in the field of size $p^n$. A prime number $r$ is said to be a primitive prime divisor of $p^n-1$  if $r$ divides $p^n-1$ but not $p^i-1$ for $1\leqslant i<n$. By \cite{zsigmondy92.265}, there exists a primitive prime divisor of $p^n-1$ except when $(p,n)=(2,6)$, or $n=2$ and $p$ is a Mersenne prime. Here, we shall note that the Mersenne primes which are less than 1000 are 3, 7, 31, 127. We call a group element a $ppd(n,p)$-element if its order is odd and divisible by a primitive prime divisor of $p^n-1$.

\section{Application of Frobenius maps: structure recovery of $\ppsl_2(q)$, $\qpone$}\label{sec:frobenius}

We remind that in all theorems and conjectures stated in this paper, we assume that black boxes for groups satisfy Axioms BB1--BB4; in particular, they come with known and computationally feasible global exponent (Axiom BB4).

For the structure recovery of $\ppsl_2(q)$, we need to recall the Steinberg generators of $\ppsl_2(q)$ as introduced by Steinberg \cite[Theorem 8]{steinberg1968}. We use  notation from \cite{suko12A}.

Let $G=\sl_2(q)$. Then set the Steinberg generators of $G$ as
\begin{eqnarray*}
\mathbf{u}(t)= \left[
\begin{array}{cc}
1  & t  \\
0   &  1 \\
\end{array}
\right],
\,
\mathbf{v}(t)= \left[
\begin{array}{cc}
1  & 0  \\
t   &  1 \\
\end{array}
\right],
\,
\mathbf{h}(t)= \left[
\begin{array}{cc}
t  & 0  \\
0   &  t^{-1} \\
\end{array}
\right],
\,
\mathbf{n}(t)= \left[
\begin{array}{cc}
0  & t  \\
-t^{-1}   &  0 \\
\end{array}
\right]
\end{eqnarray*}
where $t \in \mathbb{F}_q$ and in addition $t\neq 0$ in $\mathbf{h}(t)$ and $\mathbf{n}(t)$.

The group $\psl_2(q)$ is obtained from  $\sl_2(q)$ by factorizing over the relation
\[
\mathbf{h}(t) = \mathbf{h}(-t).
\]
Abusing notation, we are using for elements in $\psl_2(q)$ the same matrix notation as for their pre-images in $\sl_2(q)$.

 It is straightforward to check that
\begin{eqnarray}\label{opp:uni}
\mathbf{u}(t)^{\mathbf{n}(s)}=\mathbf{v}(-s^{-2}t), \,  \mathbf{u}(1)^{\mathbf{h}(t)}=\mathbf{u}(t^{-2}) \mbox{ and } \mathbf{n}(1)^{\mathbf{h}(t)}=\mathbf{n}(t^{-2}).
\end{eqnarray}
Moreover,
\begin{eqnarray}\label{tori:weyl}
\mathbf{n}(t)=\mathbf{u}(t)\mathbf{v}(-t^{-1})\mathbf{u}(t) \mbox{ and } \mathbf{h}(t)=\mathbf{n}(t)\mathbf{n}(-1).
\end{eqnarray}
It is well-known that  \[G=\langle \mathbf{u}(t), \mathbf{v}(t) \mid t\in \Ff_q \rangle,\] see, for example, \cite[Lemma 6.1.1]{carter1972}. Therefore, by (\ref{opp:uni}) and (\ref{tori:weyl}), \[G= \langle \mathbf{u}(1),\mathbf{h}(t),\mathbf{n}(1)\mid t \in \Ff_q^*\rangle;\]
notice that actually $G$ is generated by three elements
\[G= \langle \mathbf{u}(1),\mathbf{h}(t),\mathbf{n}(1)\rangle\]
where we can take $t$ as an arbitrary $ppd(k,p)$-element of the field\/ $\mathbb{F}_{p^k}$.

In this section, we prove the following theorem.

\begin{theorem}[Small characteristic]\label{cons:psl2}
Let $X$ be a black box group encrypting the group $G\cong \ppsl_2(q)$, where $\qpone$ and $q=p^k$ for some $k\geqslant 1$ {\rm(}perhaps unknown{\rm)}. If $p\neq 5,7$, then there is a Monte-Carlo algorithm which constructs,  in time quadratic in $p$ and polynomial in $\log q$,
\bi
\item  a black box field\/ $\KK$ encrypting\/ $\Ff_{q}$,  and
\item a quadratic in $p$ and polynomial in $\log q$ time isomorphism
\[
\Phi: (P)SL_2(\KK) \longrightarrow X.
\]
\ei
If $p=5$ or $7$, and $k$ has a small divisor $\ell$, the same result holds where the running time is polynomial in $\log q$ and quadratic in $p^\ell$.
\end{theorem}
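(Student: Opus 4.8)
The plan is to turn the abstract isomorphism supplied by Theorem~\ref{cons:psl2B} into a genuine black box field living inside $X$, and then to invert the Steinberg parametrisation. First I would run Theorem~\ref{cons:psl2B} to obtain the strings $u=\mathbf{u}(1)$, $h=\mathbf{h}(t)$ (with $t$ primitive) and $n=\mathbf{n}(1)$, and run Theorem~\ref{th:Frobenius} to obtain the Frobenius map $\phi\colon X\to X$ acting as $\mathbf{u}(a)\mapsto\mathbf{u}(a^p)$ and fixing $U$, $T$, $u$, $n$. The unipotent subgroup $U=\langle u^{T}\rangle$ is elementary abelian and, under the abstract isomorphism, is identified with the additive group $(\Ff_q,+)$ via $\mathbf{u}(a)\leftrightarrow a$; so I take the strings of $U$ as the elements of the prospective field $\KK$, with field addition given by multiplication in $X$, zero given by the identity, and $1:=u$. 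Since $\phi$ acts on $U$ as the field Frobenius, its order equals $k$, which recovers the (a priori unknown) exponent $k$.

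The heart of the proof is to define field multiplication on $U$ constructively and without a discrete logarithm oracle. Here I would exploit that conjugation by $h$ is the $\Ff_p$-linear map $\rho\colon \mathbf{u}(a)\mapsto\mathbf{u}(t^{-2}a)$, that is, multiplication by the fixed element $\tau=t^{-2}$; because $t$ is primitive, $\tau$ has multiplicative order $(q-1)/2$ and hence lies in no proper subfield, so $\{1,\tau,\dots,\tau^{k-1}\}$ is an $\Ff_p$-basis of $\Ff_q$ and $\rho^i(u)=\mathbf{u}(\tau^i)$. The missing ingredient is a way to read coordinates in this basis, and for this I would use the trace: the map $\Tr:=\prod_{i=0}^{k-1}\phi^i$ is computable in $U$ and sends $\mathbf{u}(a)$ to $\mathbf{u}(\Tr_{\Ff_q/\Ff_p}(a))=u^{c}$ with $c\in\{0,\dots,p-1\}$, where $c$ is read off by comparing against $u^0,\dots,u^{p-1}$ (cost linear in $p$). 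Computing the Gram matrix $\bigl(\Tr(\rho^{i+j}(u))\bigr)_{i,j}$ over $\Ff_p$, which is invertible by nondegeneracy of the trace form, together with the vector $\bigl(\Tr(\rho^{j}(x))\bigr)_j$, and solving the resulting linear system over the explicit field $\Ff_p$, yields the coordinates of any $x\in U$ in the power basis. With coordinates in hand, multiplication is bilinear: $\mathbf{u}(a)\cdot\mathbf{u}(b)=\prod_i\rho^i\bigl(\mathbf{u}(b)\bigr)^{c_i}$ where $a=\sum_i c_i\tau^i$. This makes $\KK$ a bona fide black box field; the same coordinate machinery computes the minimal polynomial of $\tau$ and the structure constants, presenting $\KK$ explicitly as $\Ff_q$, after which the isomorphism problem for black box fields \cite{maurer07.427} and Lenstra's result \cite{lenstra91.329} give an explicit identification with any prescribed copy of $\Ff_q$.

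Finally I would assemble the isomorphism $\Phi\colon\ppsl_2(\KK)\to X$ from the Steinberg relations (\ref{opp:uni})--(\ref{tori:weyl}): for $a\in\KK$ set $\Phi(\mathbf{u}(a))=a\in U$ and $\Phi(\mathbf{v}(a))=\Phi(\mathbf{u}(-a))^{n}$, and build $\mathbf{n}(s)$ and $\mathbf{h}(s)$ for arbitrary $s$ from $\mathbf{n}(s)=\mathbf{u}(s)\mathbf{v}(-s^{-1})\mathbf{u}(s)$ and $\mathbf{h}(s)=\mathbf{n}(s)\mathbf{n}(-1)$, which is now legitimate since inversion in $\KK$ is available. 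An arbitrary $g\in\ppsl_2(\KK)$ is put into Bruhat form by field arithmetic in $\KK$ and each factor is mapped into $X$; the reverse map $\Phi^{-1}$ reads a string's Bruhat factors in $X$ and converts the resulting unipotent entries into coordinates of $\KK$ by the same trace procedure.

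The main obstacle I expect is exactly the constructive definition of multiplication and coordinate extraction in $U$: one must certify that $\tau=t^{-2}$ generates $\Ff_q/\Ff_p$, that the trace map is available and that the associated form is nondegenerate, and that all of this stays within the budget quadratic in $p$ (the linear-in-$p$ cost of reading prime-field values, with the square coming only from Theorem~\ref{cons:psl2B}) and polynomial in $\log q$. A secondary difficulty is that $k$ is unknown a priori and must be extracted as the order of $\phi$ before the basis and Gram matrix can be set up, and the exceptional primes $p=5,7$ inherit the subfield-divisor hypothesis of Theorem~\ref{cons:psl2B}.
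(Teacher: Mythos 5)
Your proposal follows essentially the same route as the paper: the field $\KK$ is interpreted on the root subgroup $U$ with addition the group law, multiplication by powers of a generator realized as conjugation by the torus, the Frobenius map of Theorem~\ref{th:Frobenius} used to define the trace form whose invertible Gram matrix converts any element of $U$ into $\Ff_p$-coordinates, structure constants extracted, the black box field results of Maurer--Raub and Lenstra invoked, and finally the Steinberg generators used to write down $\Phi$. The cosmetic differences (you use the basis $1,\tau,\dots,\tau^{k-1}$ with $\tau=t^{-2}$, where the paper passes to a $ppd(k,p)$-power $\tilde h$ of $h$ and its square root $\sqrt{\tilde h}$ so as to work with powers of $\tilde t^{-1}$) do not matter, and your explicit accounting of the linear-in-$p$ cost of reading prime-field values off the trace is if anything more careful than the paper's.

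The one place where you are lighter than the paper is the Weyl element. You take the normalization $\Phi(n)=\mathbf{n}(1)$ in the statement of Theorem~\ref{cons:psl2B} at face value and set $\Phi(\mathbf{v}(a))=\Phi(\mathbf{u}(-a))^{n}$. The paper explicitly warns that the string $n$ produced there need not encrypt $\mathbf{n}(1)$ relative to the field structure you have just fixed on $U$; and since $\mathbf{u}(-a)^{\mathbf{n}(c)}=\mathbf{v}(c^{-2}a)$, plugging an unnormalized $\mathbf{n}(c)$ with $c\neq\pm1$ into your recipe does not yield a homomorphism (the map fixing every $\mathbf{u}(a)$ while rescaling every $\mathbf{v}(a)$ is not an automorphism of $\ppsl_2(q)$). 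The paper therefore devotes a step to correcting $n$: working inside a subfield subgroup encrypting $\ppsl_2(p)$, it runs over the $p-1$ candidates $v=(u^{-1})^{h_0^{j}n}$, keeps those for which $uv^{-1}u$ has order $4$ (these correspond to $\mathbf{n}(\pm1)$), and then separates $\mathbf{n}(1)$ from $\mathbf{n}(-1)$ by testing the identity $\mathbf{n}(t^{-2})=\mathbf{u}(t^{-2})\mathbf{v}(-t^{2})\mathbf{u}(t^{-2})$. You should insert this $O(p)$ normalization (it stays within the quadratic-in-$p$ budget); with that addition your argument coincides with the paper's.
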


Theorem~\ref{cons:psl2} is used in our  paper \cite{suko12E} as the basis of recursion in the proof of the following structure recovery theorem for classical groups in small characteristics.

\begin{theorem}[Small characteristic] \textbf{{\rm \cite{suko12E}}} \label{cons:classical}
Let $X$ be a black box  group encrypting one of the classical groups ${\rm G}(q) \simeq \ppsl_{n+1}(q)$, $\ppsp_{2n}(q)$, $\om_{2n+1}(q)$ or\/ $\ppom_{2n}^+(q)$, where $\qpone$ and $q=p^k$ for some $k\geqslant 1$ {\rm(}$k$ and the type of the group are perhaps unknown{\rm)}.

If $p\neq 5,7$, then there is a Monte-Carlo algorithm which constructs,  in time quadratic in $p$ and polynomial in $\log q$,
\bi
\item  a black box field\/ $\KK$ encrypting\/ $\Ff_{q}$,  and
\item a quadratic in $p$ and polynomial in $\log q$ time isomorphism
\[
\Phi: {\rm G}(\KK) \longrightarrow X.
\]
\ei
If $p=5$ or $7$, and $k$ has a small divisor $\ell$, the same result holds where the running time is polynomial in $\log q$ and quadratic in $p^\ell$.
\end{theorem}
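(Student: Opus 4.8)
The plan is to proceed by recursion on the Lie rank, using Theorem~\ref{cons:psl2} as the base case (rank one) and a Curtis--Tits amalgam to assemble the higher rank groups from their root $\ppsl_2$-subgroups; this is the structure-recovery analogue of the automorphism-amalgamation philosophy of Theorem~\ref{th:subgroups-automorphisms}. First I would determine the type of $G$, which may be unknown. Using the black boxes for a Curtis--Tits system of root $\ppsl_2$-subgroups $X_1,\dots,X_n$ constructed for classical groups in odd characteristic in \cite{suko03}, one reads off the Dynkin diagram from the pairwise behaviour of the $X_i$: whether $\langle X_i, X_j\rangle$ is a commuting product or a rank-$2$ subgroup of type $A_2$, $B_2/C_2$ or $G_2$. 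This pins down the type among $\ppsl_{n+1}(q)$, $\ppsp_{2n}(q)$, $\om_{2n+1}(q)$ and $\ppom_{2n}^+(q)$, together with the black boxes for the rank-$1$ and rank-$2$ standard subgroups.

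Second, I would apply Theorem~\ref{cons:psl2} to each root subgroup $X_i$, obtaining a black box field $\KK_i$ encrypting $\Ff_q$ and an isomorphism $\Phi_i: \ppsl_2(\KK_i)\longrightarrow X_i$ carrying the Steinberg generators to distinguished strings $u_i$, $h_i$, $n_i$ in $X_i$. Each call costs time quadratic in $p$ and polynomial in $\log q$; since there are only $n$ nodes (and a bounded number of rank-$2$ edges), the total cost fits the claimed bound, the dependence on the rank $n$ being polynomial and absorbed into $\log|G|$. For $p=5$ or $7$ one uses instead the small-divisor variant of Theorem~\ref{cons:psl2}, giving the stated $p^\ell$ bound.

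Third --- and this is the crux --- I would \emph{synchronize} the fields and pinnings. The isomorphisms $\Phi_i$ are produced independently, each with its own choice of primitive element, so there is no a priori reason that the resulting Chevalley structure constants match those of a single group ${\rm G}(\KK)$. The rank-$2$ subgroups supply the rigidity: inside $\langle X_i, X_j\rangle$ the two root tori lie in a common maximal torus whose action on both root subgroups forces a canonical field identification $\KK_i\cong\KK_j$. Using the Maurer--Raub reduction to the prime subfield together with Lenstra's isomorphism theorem (both recalled in Section~\ref{ssec:bbf}), these identifications are realized in time polynomial in $n\log p$ and linear in $p$; propagating them across the connected diagram produces a single black box field $\KK$ and a family of isomorphisms $\Phi_i$ normalized to a common pinning.

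Finally, I would invoke the Curtis--Tits presentation: the simply connected group of the determined type is the universal completion of the amalgam of its rank-$\leqslant 2$ standard subgroups, glued along their shared tori. Since the normalized $\Phi_i$ agree on overlaps and respect the rank-$2$ relations, they extend to a homomorphism from the simply connected cover into $X$, which factors through the relevant central quotient (the ``$({\rm P})$'' in $\ppsl$ and $\ppom$) to give $\Phi:{\rm G}(\KK)\longrightarrow X$. Its image contains every root subgroup and hence equals $G$, and by (quasi)simplicity it is injective, so $\Phi$ is the desired isomorphism, with $\Phi$ and $\Phi^{-1}$ evaluable in the stated time. The main obstacle is exactly the third step: guaranteeing that the independent rank-$1$ recognitions can be rescaled into a \emph{coherent} pinning so that the Curtis--Tits relations hold on the nose, rather than merely up to a field automorphism, and carrying out this bookkeeping uniformly across all four families and their projective quotients.
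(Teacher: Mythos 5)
First, a point of comparison that matters: this paper contains no proof of Theorem~\ref{cons:classical}. The statement is imported verbatim from \cite{suko12E}, and the only hint the paper gives is the preceding sentence, that Theorem~\ref{cons:psl2} serves ``as the basis of recursion'' in that paper's proof. Your skeleton --- rank-one structure recovery via Theorem~\ref{cons:psl2} anchoring an induction over the Curtis--Tits system of root $\ppsl_2$-subgroups constructed in \cite{suko03}, in the spirit of the amalgamation philosophy of Theorem~\ref{th:subgroups-automorphisms} --- is consistent with that one clue, so your proposal must be judged on its own merits rather than against a proof given here.

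On those merits there is a genuine gap, and you have in effect named it yourself: your third step. You assert that inside a rank-$2$ subgroup $\langle X_i, X_j\rangle$ the common maximal torus ``forces a canonical field identification $\KK_i\cong\KK_j$,'' but no canonical identification is forced. The action of the torus of $X_i$ on the root group of $X_j$ determines an identification only relative to the choices already buried in each $\Phi_i$ --- the unit $u_j$ encrypting $\mathbf{u}(1)$, the primitive element behind $h_j$, the sign of $\mathbf{n}(1)$ --- and every Frobenius twist and diagonal rescaling yields another identification equally compatible with the torus action. Maurer--Raub and Lenstra give you isomorphisms between each $\KK_i$ and the explicit field $\Ff_{p^k}$, but an arbitrary composite $\KK_i \to \Ff_{p^k} \to \KK_j$ will in general violate the Chevalley commutator relations in $\langle X_i, X_j\rangle$ (structure constants $\pm 1, \pm 2$ with coherent signs), which is exactly what the Curtis--Tits presentation requires to hold on the nose. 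Producing a coherent pinning --- simultaneous rescalings of all $u_i$, $h_i$, $n_i$, verified inside the black box --- is the actual mathematical content of the recursion in \cite{suko12E}, and your proposal replaces it with an appeal to a canonicity that does not exist; note that even in rank one the paper has to work nontrivially (Step~8 of Section~\ref{ssec:pls2}) merely to distinguish $\mathbf{n}(1)$ from $\mathbf{n}(-1)$. A second, smaller gap sits in your type-determination step: reading the Dynkin diagram off the rank-$2$ subgroups does not separate $\ppsp_{2n}(q)$ from $\om_{2n+1}(q)$, since $B_n$ and $C_n$ have the same underlying Coxeter diagram and the two groups even have the same order; distinguishing them (say, by locating long versus short root $\ppsl_2$-subgroups or by centralizer structure) needs an explicit argument that your sketch omits.
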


\label{sec:proofpsl2odd}

\subsection{Proof of Theorem~\ref{cons:psl2}, general case}
\label{ssec:pls2}

Our aim is to present an  algorithm which produces a black box field $\KK$ and an isomorphism
\[
\varphi: \sl_2(\KK) \rightarrow X.
\]

\ben

\item We use Theorem~\ref{cons:psl2B} as applied to our black box group $X$, so  $u$, $h$, $n$   are string in $X$ such that
\[
\Phi(u) = \begin{bmatrix} 1&1\\ 0&1\end{bmatrix}, \Phi(h) =  \begin{bmatrix} t&0\\ 0&t^{-1}\end{bmatrix}, \Phi(n) = \begin{bmatrix} 0&1\\ -1&0\end{bmatrix}
\]
for some abstract isomorphism\/ $$\Phi: X \longrightarrow \ppsl_2(q);$$ here, $t$ is some primitive element in $\mathbb{F}_{q}$, $q=p^k$. We shall note that we only know the existence of the map $\Phi$. Let $\tilde{h}$ be a $ppd(k,p)$-element produced by taking some power of $h$ and $$\Phi(\tilde{h}) =  \begin{bmatrix} \tilde{t}&0\\ 0&\tilde{t}^{-1}\end{bmatrix}.$$

\item   We consider the cyclic subgroup $T = \langle \tilde{h} \rangle$ and the unipotent subgroup $U = \langle u^T \rangle$ in $X$. Observe that $U$ is the full unipotent subgroup of $X$ since the order of $\tilde{h}$ is a $ppd(k,p)$-element in $\mathbb{F}_{p^k}$.

\item Now we start introducing on $U$ a structure of field $\KK$ isomorphic to $\GF_q$. First, for any $u_1, u_2 \in U$, we define an addition on $\KK$ by setting
\[
u_1 \oplus u_2 = u_1u_2.
\]
For the multiplication on $\KK$, we set the element $u$ as the unity  of $\KK$. Since $\tilde {h}$ is a $ppd(k,p)$-element, it has odd order $m$ and the element $\sqrt{\tilde{h}}:= \tilde{h}^{(m+1)/2}$ has the property that $\sqrt{\tilde{h}}^2=\tilde{h}$. We also set
\[
s:=u^{\sqrt{\tilde{h}}}.
\]
Notice that
\[
\left[
\begin{array}{cc}
1  & 1  \\
0   &  1 \\
\end{array}
\right]^{
\,
\left[
\begin{array}{cc}
\sqrt{\tilde{t}}  & 0  \\
0   &  \sqrt{\tilde{t}^{-1}} \\
\end{array}
\right]
} = \left[\begin{array}{cc}
1  & \tilde{t}^{-1}  \\
0   &  1 \\
\end{array}
\right].
\]
Hence $s$ can be seen as an element in $\KK$ corresponding to $\tilde{t}^{-1}$, 
and after setting $s^{i} = u^{(\sqrt{\tilde{h}})^{i}}$, the elements
\[
s, s^{2}, \dots, s^{{k-1}}, s^{k}
\]
form a polynomial basis of $\KK$ over the prime field $\LL \simeq\GF_p$.  The additive groups of $\LL$ is cyclic of order $p$. We have already fixed the identity element $1$ of $\KK$ and hence of $\LL$, which uniquely defines the multiplicative structure on $\LL$.

For  $w\in U$, we define the product
\[
w \otimes s^l = w^{h^l}
\]
 and expanded by linearity to product of any two elements in $\KK$. We still do not know, however, why this operation can be carried out in feasible time---but we should be reassured that at least the product $w\otimes s^l$ can be computed in time polynomial in $\log q$. So at this stage we treat $\KK$ a \emph{partially} polynomial time black box field: random generation, comparison,  and addition of elements in $\KK$ can be carried out in polynomial in $\log q$, as well as multiplication of an arbitrary element in $\KK$ by some specific elements.

\item In view of Theorem~\ref{th:Frobenius}, we have the Frobenius map $\phi$ on our black box group $X \simeq \ppsl_2(q)$ which leaves $U$ and $T$ invariant and induces the Frobenius map $F$ on $U$. This allows us to introduce on $U$ the Frobenius trace $\Tr: U \to \GF_p$
\[
\Tr(x) = x \oplus x^F \oplus x^{F^2}\oplus \cdots\oplus x^{F^{k-1}}
\]
and the trace form, that is, the non-degenerate symmetric $\mathbb{F}_p$-bilinear form given by
\[
\langle x,y\rangle = \Tr(x\otimes y).
\]

It is interesting that the Frobenius map and the trace form of our future black box field are introduced \emph{before} the field multiplication!

We do not know yet whether the evaluation of the trace form on $\KK$ is computationally feasible, but we can compute in polynomial in $\log q$ time the values $w\otimes s^l = w^{h^l}$  and of $\langle w, s^l\rangle$ for arbitrary $w \in \KK$ and powers of $s$. In particular, this allows us to compute the matrix of the trace form
\[
A = (a_{ij})_{k\times k}, \quad a_{i,j} = \langle s^{{i}},s^{{j}}\rangle, \quad i,j, = 1,2,\dots, k.
\]

\item We are now in position to introduce in $\KK$ an explicit structure of a $\LL$ vector space by computing the decomposition of an arbitrary element $w\in \KK$ with respect to the basis $s, s^2,\dots, s^{{k}}$. Indeed, for an arbitrary element $w\in \KK$, set
    \[
    w = \alpha_1s\oplus \alpha_2s^2\oplus \cdots\oplus  \alpha_{k}s^{{k}}
    \]
    and
    \[
    \beta_j = \langle w, s^{j}\rangle, \quad j = 1, 2, \dots, k.
    \]
The coefficients $\beta_j$ are computable in time polynomial in $\log p$ and $k$:
    \[
    \beta_{j} = \langle w, s^{j}\rangle = \sum_{i=1}^{k} \alpha_i a_{ij},
    \]
    which in matrix notation becomes
    \[
    (\beta_1,\dots,\beta_{k}) =  (\alpha_1,\dots, \alpha_{k})\cdot A,
    \]
    and therefore
    \[
    (\alpha_0,\dots, \alpha_{k-1}) = (\beta_0,\dots, \beta_{k-1})\cdot A^{-1}.
    \]

\item   We can now decompose products $s^{i}\otimes s^{j}$ with respect to the basis $s, s^2,\dots, s^{{k}}$ and thus find the structure constants $c_{ijl}$ for this basis:
\[
s^{i}\otimes s^{j} = \sum_{l=1}^{k}  c_{ijl}s^{l}.
\]
Of course now we are in position to multiply any two elements in $\KK$, and, as we can easily see,  in time polynomial in  $\log q$. Now, we shall use the algorithms in \cite{allombert02.332,lenstra91.329, maurer07.427} to construct the isomorphism between $\mathbb{F}_{p^k}$ and $\mathbb{K}$, see discussion in Section~\ref{ssec:bbf}.

\item Now, we construct $\ppsl_2(\KK)$ by using the Steinberg generators, see Section \ref{sec:frobenius}.  Recall that the element $s \in \KK$ corresponds to the element $\tilde{t}^{-1} \in \mathbb{F}_q$ where $\tilde{t}$ is a $ppd(k,p)$-element in $\mathbb{F}_q$, so $s$ is a  $ppd(k,p)$-element in $\KK$.  We construct, in $\ppsl_2(\KK)$, the elements encrypting the strings
\[
\mathbf{u}(1), \mathbf{h}(s^{-1}), \mathbf{n}(1)
\]
by using the isomorphism between the fields $\mathbb{F}_{p^k}$ and $\KK$ constructed in Step 6.

\item Our first assignments are $\mathbf{u}(1) \mapsto u$ and $\mathbf{h}(s^{-1}) \mapsto \tilde{h}$. Now we need to construct the element in $X$ encrypting the string $\mathbf{n}(1)$. Note that the element $n\in X$, which was constructed in Step 1,  need not necessarily be the element corresponding to $\mathbf{n}(1)$. Therefore we shall replace the original element $n$ by the one that corresponds to $\mathbf{n}(1)$. Recall that the elements $u,n \in X$ are indeed computed inside a subgroup isomorphic to $\ppsl_2(p)$ or $\psl_2(p^2)$ depending on $\ppone$ or $\pmone$, respectively \cite{suko12A}. For simplicity, we may assume that this subgroup encrypts $\ppsl_2(p)$ and the following computations are carried out in this black box subgroup. Note that raising the element $h$ to the power so that the resulting element $h_0$ has order $(p-1)/2$ and belongs to this subgroup isomorphic to $\ppsl_2(p)$.

We compute all $v:=(u^{-1})^{h_0^kn}$ for $k=1, \ldots p-1$, and check which of the elements of the form
\[
uv^{-1}u
\]
has order $4$ (Recall that, by (\ref{opp:uni}) and (\ref{tori:weyl}), we have $\mathbf{u}(1)^{\mathbf{n}(s)}=\mathbf{v}(-s^{-2})$ and $\mathbf{n}(t)=\mathbf{u}(t)\mathbf{v}(-t^{-1})\mathbf{u}(t)$). Observe that there are only two elements of the form $uv^{-1}u$ of order 4 and they  correspond to the elements $\mathbf{n}(1)$ and $\mathbf{n}(-1)$. Now we need to distinguish $\mathbf{n}(1)$ from $\mathbf{n}(-1)$. Recall also that, by (\ref{opp:uni}) and (\ref{tori:weyl}), we have
$$\mathbf{n}(1)^{\mathbf{h}(t)}=\mathbf{n}(t^{-2}), \mathbf{u}(1)^{\mathbf{h}(t)}=\mathbf{u}(t^{-2}), \mathbf{v}(1)^{\mathbf{h}(t)}=\mathbf{v}(t^2)$$ and
\begin{equation}\label{nn1}
\mathbf{n}(t^{-2})=\mathbf{u}(t^{-2})\mathbf{v}(-t^2)\mathbf{u}(t^{-2}).
\end{equation}
 Now it is easy to see that if one of the elements of the form $uv^{-1}u$ of order 4 corresponds to the Weyl group element $\mathbf{n}(-1)$, then Equation (\ref{nn1}) is not satisfied. Hence the Weyl group element $h_0^kn$ which produces the element $uv^{-1}u$ satisfying Equation (\ref{nn1}) is the desired Weyl group element, say $\tilde{n}$.

\item Observe that the following map
\begin{eqnarray*}
\ppsl_2(\KK) & \longrightarrow & Y\\
\mathbf{u}(1) &\mapsto & u\\
\mathbf{h}(s^{-1}) &\mapsto & \tilde{h}\\
\mathbf{n}(1) &\mapsto&\tilde{n}
\end{eqnarray*}
is an isomorphism.

\end{enumerate}

Notice that the algorithm described above provides a proof of Theorem \ref{cons:psl2}.

\subsection{A more straightforward treatment of  $\sl_2(p)$}

Because of its importance, we give a streamlined construction of an isomorphism between $\sl_2(p)$, $p \equiv 1 \bmod 4$, and a black box group $X$ encrypting $\sl_2(p)$. Notice, in this case, that we may assume that the field structure of $\Ff_p$ is available. Hence, we shall construct the elements in $X$ encrypting the images of $\mathbf{u}(1), \mathbf{h}(t)$ and $\mathbf{n}(1)$ where $0,1 \neq t\in \Ff_p$ in $X$.

Step 1: Using Theorem~\ref{cons:psl2B}, we select in $X$ a unipotent element $u$, a toral element $h$  normalizing the root subgroup containing $u$, and $n$ a Weyl group element for the torus containing $h$. Our fist assignment is $ \mathbf{u}(1) \mapsto u$.

Step 2: Recall that for a given $\mathbf{h}(t)$ we have $\mathbf{u}(1)^{\mathbf{h}(t)}=\mathbf{u}(t^{-2})$. Assume that $\mathbf{u}(t^{-2})=\mathbf{u}(k)=\mathbf{u}(1)^k$ for some $k\in \{1,2,\ldots, p-1\}$.

Now we check whether $u^h=u^k$ in $X$. If not, then some power $\ell$ of $h$ has this property, that is, $u^{h^\ell}=u^k$. Observe that $\ell$ is necessarily relatively prime to $p-1$ so that the resulting element $h^\ell$ generates the torus. We replace $h$ with $h^\ell$ and assign $\mathbf{h}(t) \mapsto h$.

Step 3: Now we compute $\mathbf{n}(1)$ by using the same arguments in Step 9 of the algorithm in  Section~\ref{ssec:pls2}. Thus we have an isomorphism
\begin{eqnarray*}
\sl_2(p) & \longrightarrow & X\\
\mathbf{u}(1) &\mapsto & u\\
\mathbf{h}(t) &\mapsto & h\\
\mathbf{n}(1) &\mapsto&n.
\end{eqnarray*}

\section{A Revelation and Its Reverberations: Proof of Theorem~\ref{th:sl(2,2n)}}
\label{sec:proofSl2odd}

\subsection{Proof of Theorem~\ref{th:sl(2,2n)}}

We describe an algorithm which produces a black box field $\uu$ and an isomorphism
\[\Phi: \sl_2(\uu) \longrightarrow X.\]

\ben
\item We take our revelation involution $r$ and consider strongly real elements of the form $r^x\cdot r$ for random $x\in X$, and raising them to appropriate powers, find an element $\theta$ of order $3$ inverted by $r$.

\item Set     $v = \theta r$ and $w = \theta^2 r$. Observe that $v$ and $w$ are involutions and $L = \langle \theta\rangle \langle r \rangle$ is the dihedral group of order $6$.

\item Observe that all dihedral subgroups of order $6$ in $X$ are conjugate in $X$ and therefore we can assume without loss of generality that $L \cong \sl_2(2)$ encrypts a subfield subgroup of $\sl_2(2^n)$. In particular, there exist a system of Steinberg generators of $\sl_2(2^n)$,
    \begin{eqnarray*}
\mathbf{u}(t)= \left[
\begin{array}{cc}
1  & t  \\
0   &  1 \\
\end{array}
\right],
\,
\mathbf{v}(t)= \left[
\begin{array}{cc}
1  & 0  \\
t   &  1 \\
\end{array}
\right],
\,
\mathbf{h}(t)= \left[
\begin{array}{cc}
t  & 0  \\
0   &  t^{-1} \\
\end{array}
\right],
\,
\mathbf{n}(t)= \left[
\begin{array}{cc}
0  & t  \\
t^{-1}   &  0 \\
\end{array}
\right]
\end{eqnarray*}
for $t\in \GF_{2^n}$ and $t\neq 0$ for $\mathbf{h}(t)$ and $\mathbf{n}(t)$, and such that
$r$, $v$ and $n$ encrypt $\mathbf{u}(1)$, $\mathbf{v}(1)$, and $\mathbf{n}(1)$, correspondingly.

\item The standard procedure for construction of centralizers of involutions \cite{borovik02.7,bray00.241} produces unipotent subgroups $U = C_X(r)$ and $V = C_X(v)$. If we set $$H= \langle h(t) \mid t \in \GF_{2^n}\rangle$$ (warning: this subgroup is not constructed yet) then $B^+ = UH = N_X(U)$ and $B^- = VH=N_X(V)$ are Borel subgroups in $X$.

\item Observe that if $x\in X$ is such that $u^x \in U$ for some $1\ne u \in U$ then $x \in B$.

\item We can identify action of $H$ on $U$ by conjugation with the action of $B/U$ on $U$. Observe that for any two involutions $s, t\in U$ there is a unique $\bar{b} \in B/U$ such that $s^{\bar{b}}= t$.

\item Using the double conjugation trick, we can find, for any given involutions $s$ and $t$ in $U$ an element $x$ in $X$ (and hence in $B$) such that $s^x = t$. This is done in the following way: notice that the exponent of $\sl_2(2^n)$ is $2\cdot(2^n-1)(2^n+1)$ and therefore if $y\in X$ is an element of odd order than $y^{2^{2n}-1} =1$. By conjugating $s$ by a random element $z\in X$, find an involution $r = s^z$ such that elements $y_1 = sr$ and $y_2 = rt$ have  odd order. Then it can be checked directly that
    \[
    s^{\left((sr)^{2^{2n-1}}\right)} = r \quad \mbox{ and } \quad  r^{\left((rt)^{2^{2n-1}}\right)} = t \]
    and
    \[
    x = (sr)^{2^{2n-1}} \cdot (rt)^{2^{2n-1}}
    \]
    has the desired property $s^x = t$.
    By the previous point, the coset $xU$ in $B/U$ is uniquely determined.

    (The same idea of ``local conjugation'' of involutions is used by Ballantyne and Rowley for construction of centralizers of involutions in black box groups with expensive generation of random elements \cite{Ballantyne2013}.)

\item Treating the subgroup $B$ as a black box, we have
\[
U = \{\, x \in B \mid x^2=1 \}.
\]
Therefore after introducing on $B$ a new equality relation
\[
x \equiv y \mbox{ if and only if } (xy^{-1})^2 =1
\]
we get a black box $\TT$ for the factor group $T = B/U$. Notice that there is a natural action of $\TT$ on $U$ by conjugation and that notation
\(u^t\) for $u \in U$ and $t \in \TT$ is not ambiguous.

\item Now we construct a black box field $\uu$. We start with the multiplicative group $\uu^*$ of $\uu$ which we define as the graph of the orbit action map of $\TT$ onto the orbit $r^\TT$. Namely, $\uu^*$ is the set of all pairs
 $(t, s)$ with $t \in \TT$ and $s \in U\smallsetminus \{1\}$ such that $r^t =s$. We define in $\uu$ multiplication $\otimes$ by the rule
 \[
 (t_1,u_1) \otimes (t_2,u_2) = (t_1t_s, r^{t_1t_2}).
 \]
 In particular, the element $\mathbf{1} = (1,r)$ plays the role of the identity element in $\uu^*$.

 Then we define the zero element of $\uu$ as
 \[
\mathbf{0}=(1,1),
\]
set
\[
\uu = \uu^* \cup \{0\}
\]
(and use lower case boldfaced letter to denote elements $\mathbf{u} \in \uu$), and define
\[
\mathbf{0} \otimes \mathbf{u} = \mathbf{u} \otimes \mathbf{0} \mbox{ for all } \mathbf{u} \in \uu^*.
\]
Finally, we define on $\uu$ addition $\oplus$ by setting
\bea
\mathbf{0} \oplus \mathbf{u} = \mathbf{u} \oplus \mathbf{0} &=& \mathbf{u}\\
 \mathbf{u} \oplus \mathbf{u} &=& \mathbf{0}\\
 (t_1,u_1) \oplus (t_2, u_2) &=& (t, u_1u_2)
 \eea
where in the last line $u_1 \ne u_2$ (and thus $u_1u_2 \ne 1$) and $t\in \TT$ is chosen to send $r$ to $u_1u_2$, that is, $r^t = u_1u_2$.
It follows that the inverse $\mathbf{u}^{-1}$ of $\mathbf{u} = (t,u) \ne \mathbf{0}$ with respect to multiplication $\otimes$ is equal to $(t^{-1},r^{t^{-1}})$.

\item So we have a black box field $\uu$ interpreted in the Borel subgroup $B = N_X(C_X(r)))$ of the black box group $X$ and such that $X$ encrypts $\sl_2(\uu)$.

It will be convenient to use traditional notation and denote $1 = u(\mathbf{0})$, and write, for elements $\mathbf{t}\in \uu^*$, $u = u(\mathbf{t})$ if $\mathbf{t} = (t,u)$. In particular, $r = u(\mathbf{1})$. This gives us a parametrization of $U$ by elements of the black box field $\uu$.

\item Now we transfer the black box field parametrization from $U$ to $V$ by setting $v(\mathbf{0}) =1$ and for setting for non-identity elements $v\in V$
\[
v= v(\mathbf{t}) \mbox{ if } v^w = u(\mathbf{t}).
\]

We set further
\[
n(\mathbf{t}) = u(\mathbf{t})v(\mathbf{t}^{-1})u(\mathbf{t}),
\]
so that this agrees with computation in $L\cong \sl_2(2)$, yielding
\[
n(\mathbf{1}) = w,
\]
and finally set
\[
h(\mathbf{t}) = n(\mathbf{t})n(\mathbf{1}).
\]
Notice that
\[
\left\{ h(\mathbf{t}) \mid \mathbf{t} \in \uu^*\right\} = N_X(V) \cap N_X(U)
\]
is the uniquely determined maximal torus in $X$ normalizing the both $V$ and $U$. We denote it by $H$.

\item We can now construct an isomorphism
    \[\Psi: \sl_2(\uu) \longrightarrow X.\]
First of all, recall that matrices from $\sl_2(\uu)$ are quadruples
\[
\bbm a_{11} & a_{12} \\ a_{21} & a_{22}\ebm
\]
of strings $a_{ij}$ generated by black box $\uu$, with matrix addition and multiplication defined with respect to operations $\oplus$ and $\otimes$.
    \ben
    \item Notice easy-to-check identities over any field of characteristic $2$:
    \ben
    \item given $a$, $b$, and $d$ such that $bc = 1$, we have
    \[\bbm 0 & b \\ c& d\ebm = \bbm 0 & 1 \\ 1 & 0\ebm \bbm c & 0 \\ 0 &b\ebm\bbm 1 & bd \\ 0 & 1\ebm;
    \]
    \item for $a\ne 0$ and $ad-bc =1$,
    \[
    \bbm a & b \\ c & d\ebm = \bbm a & 0 \\ 0 & a^{-1}\ebm \bbm 1 & 0 \\ ac &1\ebm\bbm 1 & a^{-1}b \\ 0 & 1\ebm.
    \]
    \een
    \item Therefore we can map
    \bea
    \Psi: \bbm \mathbf{0} & \mathbf{b} \\ \mathbf{c} & \mathbf{d}\ebm &\mapsto&  n(\mathbf{1})h(\mathbf{c})u(\mathbf{b}\otimes\mathbf{d})\\
    \Psi: \bbm \mathbf{a} & \mathbf{b} \\ \mathbf{c} & \mathbf{d}\ebm  &\mapsto&  h(\mathbf{a})v(\mathbf{a}\otimes\mathbf{c})u(\mathbf{a}^{-1}\otimes \mathbf{b}) .
    \eea
    This is an isomorphism.
    \een

\een

This completes the proof of Theorem~\ref{th:sl(2,2n)}. \hfill $\Box$

\subsection{Other groups of characteristic $2$}

We expect that Theorem~\ref{cons:classical} is mirrored by the following conjecture.

\begin{conjecture}\label{conjecture:char2}
Let $X$ be a black box  group encrypting one of the untwisted Chevalley groups ${\rm G}(2^n)$.  We assume that we  are given an involution $u \in X$.

Then there is a Monte-Carlo algorithm which constructs a polynomial time \textup{(}in $l(X)$\textup{)} isomorphism
\[
\Phi: {\rm G}(2^n) \longrightarrow X.
\]
The running time of the algorithm is polynomial in  $n$ and the Lie rank of ${\rm G}(2^n)$.
\end{conjecture}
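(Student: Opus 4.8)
The plan is to reduce the general untwisted case to the rank-one case already settled in Theorem~\ref{th:sl(2,2n)}, using a Curtis--Tits amalgam as the organising principle, exactly the mechanism invoked in the proof of the ``known characteristic'' Frobenius theorem above. Write $G = {\rm G}(2^n)$ with a fixed root system and simple roots $\alpha_1,\dots,\alpha_\ell$, and recall that $G$ is generated by the fundamental root $\sl_2(2^n)$-subgroups $K_i = \langle X_{\alpha_i}, X_{-\alpha_i}\rangle$ together with the rank-two subgroups $K_{ij} = \langle K_i, K_j\rangle$ for adjacent nodes $i,j$, subject to the Curtis--Tits relations. Black boxes for such Curtis--Tits systems in characteristic two were constructed in \cite{suko03, suko05}, so the first step is to locate, starting from the revelation involution $u$ (which in characteristic two we may take to be a long root element), black box subgroups $X_i \hookrightarrow X$ encrypting the $K_i$. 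Concretely I would build the centralizer $C_X(u)$ by the Bray-type procedure used in the proof of Theorem~\ref{th:sl(2,2n)}, read off a Levi subgroup of smaller Lie rank from its structure, and recurse on the rank to expose all the fundamental subgroups and the rank-two subgroups joining them.

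Once the $X_i$ are in hand, each carries a long root involution, so Theorem~\ref{th:sl(2,2n)} applies verbatim and returns a black box field $\uu_i$ together with an isomorphism $\sl_2(\uu_i) \longrightarrow X_i$. At this point each root subgroup is equipped with its own parametrization of the ground field, and the combinatorial heart of the argument is to \emph{synchronize} these into a single black box field $\uu$. This is done inside the rank-two subgroups $X_{ij}$: the Chevalley commutator formula in $K_{ij}$ (of type $A_2$, $B_2$ or $G_2$) links the additive parameter of $X_{\alpha_i}$ to that of $X_{\alpha_j}$ by a fixed, field-linear recipe, and evaluating it in the black box yields an explicitly computable field isomorphism $\uu_i \longrightarrow \uu_j$. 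Propagating these identifications along a spanning tree of the Dynkin diagram identifies all the $\uu_i$ with one $\uu$; the remaining cycle-closing edges impose consistency conditions which the structure constants of $G$ guarantee are automatically satisfied, so the identification is coherent around the whole diagram.

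With a single black box field $\uu$ and mutually compatible Steinberg generators for every fundamental root, the final step is to invoke the Curtis--Tits presentation: the assignment sending the abstract Steinberg generators of ${\rm G}(\uu)$ to these strings respects the defining relations of the amalgam, since each relation lives in some $X_i$ or $X_{ij}$ and can be checked there, and therefore extends to a morphism $\Phi: {\rm G}(\uu) \longrightarrow X$. Since the images generate $X$ and ${\rm G}(2^n)$ is (quasi-)simple, $\Phi$ is an isomorphism, and the amalgamation of the local constructive recognitions furnishes effective inverse procedures; Theorem~\ref{th:subgroups-automorphisms} supplies the template for gluing the local data into a single global morphism in polynomial time.

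I expect the main obstacle to lie in the descent of the first paragraph: extracting the fundamental root subgroups and the rank-two subgroups efficiently from a single revelation involution, via a controlled sequence of involution centralizers, without incurring more than polynomial-in-$n$-and-rank cost, and in particular arranging the recursion so that the local applications of Theorem~\ref{th:sl(2,2n)} all take place in genuinely rank-one black box subgroups. A secondary difficulty is ensuring that the field isomorphisms $\uu_i \longrightarrow \uu_j$ produced in the rank-two subgroups are not merely abstract but computable in polynomial time, which is what makes the global field $\uu$ usable; this is where the characteristic-two Curtis--Tits machinery of \cite{suko03, suko05} does the decisive work.
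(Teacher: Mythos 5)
The statement you are proving is presented in the paper as Conjecture~\ref{conjecture:char2}: the paper offers no proof of it. The only thing the authors establish in its direction is Theorem~\ref{longroot}, which interprets the field $\GF_{2^n}$ in the action of $N_X(U)$ on a single long root subgroup $U$ (a direct generalization of Step~8 of the proof of Theorem~\ref{th:sl(2,2n)}), and they then remark that a full structure recovery for ${\rm G}(2^n)$ would likely have to follow the framework of Timmesfeld's classification of groups generated by root-type subgroups. So your proposal cannot be checked against a proof in the paper, and should be judged as an attempt at an open problem.

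As such an attempt, it has one concretely wrong step and one unresolved core difficulty. The wrong step: you assert that black boxes for Curtis--Tits systems ``in characteristic two were constructed in \cite{suko03, suko05}''; the paper states explicitly that those constructions are for classical and exceptional groups of \emph{odd} characteristic. They rest on the odd-characteristic involution machinery, in which involutions are semisimple and their centralizers are (close to) reductive subgroups containing root $\sl_2$-subgroups as components. In characteristic $2$ the revelation involution $u$ is unipotent, $C_X(u)$ contains a large normal $2$-subgroup rather than a Levi factor one can ``read off,'' and the Bray-type procedure does not hand you a smaller-rank reductive subgroup to recurse on. This is precisely the gap that makes the statement a conjecture, and your first paragraph acknowledges it as ``the main obstacle'' without closing it; a proof would have to supply a genuinely new descent mechanism here (the authors' pointer to Timmesfeld's root-subgroup theory suggests they expect the organizing principle to be the geometry of long root subgroups, as in Theorem~\ref{longroot}, rather than a centralizer recursion). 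Your second and third paragraphs---synchronizing the local fields through the rank-two subgroups via the Chevalley commutator relations and then invoking the Curtis--Tits presentation---are a sensible and standard plan once the fundamental $\sl_2(2^n)$-subgroups are in hand, though you would still need to handle the degenerate commutator relations in types ${\rm B}_2$ and ${\rm G}_2$ in characteristic $2$ and to verify that the induced field identifications are polynomial-time computable. In short: the endgame is plausible, but the opening, which is the actual content of the conjecture, is missing.
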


As a comment to Conjecture~\ref{conjecture:char2}, we formulate here the following easy result.

\begin{theorem}\label{longroot} Let\/ $X$ be a black box group encrypting an untwisted Chevalley group ${\rm G}(2^n)$ {\rm (}with $n$ known{\rm )} and\/ $U < X$ an unipotent long root subgroup given as a black box subgroup of\/ $X$. Then there is a polynomial time, in $n$ and  Lie rank of ${\rm G}(2^n)$, Monte-Carlo algorithm which constructs a black box for $N_X(U)$ and a black box $\mathbb{U}$ for the field  $\GF_{2^n}$ interpreted in the action of $N_X(U)$ on $U$, with $U$ becoming the additive group of the field $\mathbb{U}$.
\end{theorem}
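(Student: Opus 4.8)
The plan is to reduce the statement to the already-proven $\sl_2(2^n)$ case, Theorem~\ref{th:sl(2,2n)}, by locating a fundamental long-root $\sl_2(2^n)$-subgroup $K$ containing $U$, and then transporting the resulting torus action into the full normalizer $N_X(U)$. I would begin with the structural observations that make this possible. Since $U$ is a long root subgroup of an untwisted group over $\GF_{2^n}$, it is elementary abelian and, as an abstract group, isomorphic to the additive group of $\GF_{2^n}$; in particular every non-identity $r\in U$ is an involution. The crucial point is that the group of $\GF_{2^n}$-points acts $\GF_{2^n}$-linearly on $U$, so that $N_X(U)/C_X(U)$ embeds into the scalar group $\GF_{2^n}^\ast$ and acts on $U$ by scalar multiplication. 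This has two consequences. First, for a long-root involution one has $C_X(r)=C_X(U)$ for every single $r\in U\smallsetminus\{1\}$ (fixing one non-zero element under a scalar action forces the scalar to be $1$), so $C_X(U)$ is produced in one stroke as the involution centralizer $C_X(r)$ by the standard construction of \cite{borovik02.7,bray00.241}. Second, any $g\in N_X(U)$ is determined modulo $C_X(U)$ by the single value $r^g$.

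Next I would manufacture the fundamental $\sl_2(2^n)$-subgroup, following the opening of the proof of Theorem~\ref{th:sl(2,2n)}. Sampling strongly real elements $r^x r$ for random $x\in X$ and taking suitable powers, I find an element $\theta$ of order $3$ inverted by $r$, and set $v=\theta r$, so that $\langle r,\theta\rangle\cong\sl_2(2)$ and $rv$ has order $3$. I then form $K=\langle U,v\rangle$ and test, using element-order statistics against the known value of $n$, whether $K\cong\sl_2(2^n)$; if the test fails I discard $\theta$ and resample. When $v$ lies in a long-root subgroup $V$ opposite to $U$, the Steinberg relation $\mathbf{n}(1)=\mathbf{u}(1)\mathbf{v}(1)\mathbf{u}(1)$ shows that $\langle U,v\rangle=\langle U,V\rangle$ is the desired fundamental $\sl_2(2^n)$.

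With $K$ in hand I apply Theorem~\ref{th:sl(2,2n)} to $K$, using $r$ as the revelation involution. This equips $U$ with a black box field structure, its additive group being $U$ itself, and, as a by-product, produces the maximal torus $H$ of $K$ normalising $U$ together with a generator $h$ of $H$ acting on $U$ as multiplication by a primitive element of $\GF_{2^n}$. I then set $N_X(U)=\langle C_X(r),h\rangle$: since $C_X(r)=C_X(U)$ and $H$ already surjects onto $N_X(U)/C_X(U)\cong\GF_{2^n}^\ast$ (the map $t\mapsto t^2$ being onto in characteristic $2$), this subgroup is exactly $N_X(U)$. Imposing on $N_X(U)$ the new equality $g\equiv g'\iff r^g=r^{g'}$ yields a black box $\TT$ for $N_X(U)/C_X(U)$ acting regularly on $U\smallsetminus\{1\}=r^\TT$, and the black box field $\uu$ is then assembled exactly as in Theorem~\ref{th:sl(2,2n)}: addition is multiplication in $U$ with $\mathbf{0}=1_U$, and the multiplicative group is the graph of the orbit map $\TT\to r^\TT$. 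By construction $U$ is the additive group of $\uu$ and the field is interpreted in the action of $N_X(U)$ on $U$.

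The main obstacle is the middle step: guaranteeing that random choices of $\theta$ produce, with probability bounded below by an inverse polynomial in $n$, an involution $v$ lying in a root subgroup opposite to $U$, so that $K=\langle U,v\rangle$ is a fundamental $\sl_2(2^n)$ rather than a larger subgroup generated by two long-root subgroups at a different angle. This amounts to a lower bound on the proportion of pairs of long-root involutions whose product has order $3$ and which generate a fundamental $\sl_2(2^n)$; the cheap order-statistics verification then turns the search into a Las Vegas procedure. Everything else is either a direct appeal to Theorem~\ref{th:sl(2,2n)} or the routine bookkeeping of transporting the field structure and torus action from $K$ to the full normalizer $N_X(U)$.
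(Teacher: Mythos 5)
Your reduction to Theorem~\ref{th:sl(2,2n)} hinges on the step that you yourself flag and do not close, and that step is where the real work would lie: you must show that a random $\theta$ of order $3$ inverted by $r$ yields, with probability bounded below by an inverse polynomial, an involution $v=\theta r$ lying in a long root subgroup \emph{opposite} to $U$, and you must then certify that $\langle U,v\rangle\cong\sl_2(2^n)$. Neither point is routine: pairs of long root elements fall into several configurations (commuting pairs, pairs generating unipotent groups, pairs generating a fundamental $\sl_2$), a proportion estimate for the last configuration is a nontrivial statement that nothing in the paper supplies, and ``element-order statistics'' is not by itself a verification procedure distinguishing $\sl_2(2^n)$ from the other subgroups that $U$ and a conjugate involution can generate. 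A smaller quibble: your claim $C_X(r)=C_X(U)$ is justified by the scalar action of $N_X(U)/C_X(U)$ on $U$, but that argument only applies once one knows $C_X(r)\leqslant N_X(U)$, which is a separate (true, but here unproven) fact about long root involutions.

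The paper's own proof bypasses the $\sl_2(2^n)$-subgroup entirely and rests on exactly two observations: $U$ is a TI-subgroup of $X$, and all involutions of $U$ are conjugate in $N_X(U)$. Given these, the ``double conjugation trick'' of Step~7 of the proof of Theorem~\ref{th:sl(2,2n)} produces, for any two involutions $s,t\in U$, an element $x\in X$ with $s^x=t$; the TI property then forces $U^x=U$, so $x\in N_X(U)$, and one obtains elements of $N_X(U)$ realising every ``ratio'' in the scalar action without ever constructing a torus or recognising an overgroup. The field $\mathbb{U}$ is then interpreted exactly as in Steps~8--9 of that proof: the additive group is $U$ itself and the multiplicative group is the graph of the orbit map of $N_X(U)$, modulo the point stabiliser, on $U\smallsetminus\{1\}$. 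So even if you supplied the missing probabilistic estimate, your route through a fundamental $\sl_2(2^n)$ would be a considerable and unnecessary detour.
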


The proof of this theorem is an immediate and obvious generalization of Step~8 in the proof of Theorem~\ref{th:sl(2,2n)} in Section~\ref{sec:proofSl2odd}. Indeed,  it suffices to observe that $U$ is a TI-subgroup of $X$ (that is, $U\cap U^g =1$ or $U$ for all $g \in G$) and that all involutions in $U$ are conjugate in $N_X(U)$.

Theorem~\ref{longroot} suggests that structure recovery of black box Chevalley groups ${\rm G}(2^n)$ is likely to share some of the conceptual framework of Franz Timmesfeld's classification of groups generated by root type subgroups \cite{timmesfeld90.167,timmesfeld91.575,timmesfeld92.183,timmesfeld2001}. If so, then this will be strikingly similar to the use of Aschbacher's classical involutions \cite{aschbacher77.353,aschbacher80.411} and root $\sl_2$-subgroups in our structural theory of classical black box groups in odd characteristic \cite{suko03,suko12E,suko12A,suko12B,suko02,yalcinkaya07.171}.

\section*{Acknowledgements}

This paper would have never been written if the authors did not enjoy the warm hospitality offered to them at the Nesin Mathematics Village (in \c{S}irince, Izmir Province, Turkey) in August 2011, August 2012, and July 2013; our thanks go to Ali Nesin and to all volunteers and staff who have made the Village a mathematical paradise.

We thank Adrien Deloro for many fruitful discussions, in \c{S}irince and elsewhere, and
Bill Kantor and Rob Wilson for their helpful comments.

Special thanks go to our logician colleagues: Paola D'Aquino, Gregory Cherlin, Jan Kraj{\'{\i}}{\v{c}}ek,  Angus Macintyre, Jeff Paris, Jonathan Pila, and Alex Wilkie  for pointing to fascinating connections with logic and complexity theory.

We gratefully acknowledge the use of Paul Taylor's \emph{Commutative Diagrams} package, \url{http://www.paultaylor.eu/diagrams/}.

\nocite{yalcinkaya07.171}

\bibliographystyle{amsplain}
\bibliography{BlackBoxBibliography}

\end{document}